\newtheorem{theorem}{Theorem}[section]
\theoremstyle{definition}
\newtheorem{definition}[theorem]{Definition}
\newtheorem{example}[theorem]{Example}
\newtheorem{corollary}[theorem]{Corollary}
\newtheorem{lem}[theorem]{Lemma}
\theoremstyle{remark}
\numberwithin{equation}{section}
\begin{document}

\title{Remarks on $J$-hyperideals and their expansion
  }

\author{M. Anbarloei}
\address{Department of Mathematics, Faculty of Sciences,
Imam Khomeini International University, Qazvin, Iran.
}

\email{m.anbarloei@sci.ikiu.ac.ir }


\subjclass[2010]{ 16Y99}


\keywords{  $n$-ary $J$-hyperideal, $n$-ary $\delta$-$J$-hyperideal, $(k,n)$-absorbing $\delta$-$J$-hyperideal.}

\begin{abstract}
The aim of this research work is to define and characterize a new class of hyperideals in a Krasner $(m,n)$-hyperring that we call n-ary $J$-hyperideals. Also, we study the concept of n-ary $\delta$-$J$-hyperideals as an expansion of n-ary $J$-hyperideals. Finally, we extend the notion of n-ary $\delta$-$J$-hyperideals to $(k,n)$-absorbing $\delta$-$J$-hyperideals.
\end{abstract}
\maketitle
\section{Introduction}
Hyperstructures represent a natural extension of classical algebraic structures and they were
introduced by the French mathematician F. Marty.   In 1934,  Marty \cite{s1} defined  the concept of a hypergroup as a generalization of groups during the $8^{th}$ Congress of the Scandinavian Mathematicians. Many papers and books concerning hyperstructure theory have
appeared in literature  (see \cite {s2, s3, davvaz1, davvaz2, s4, jian}). The simplest algebraic hyperstructures which possess the properties of closure and associativity are called  semihypergroups. 
$n$-ary semigroups and $n$-ary groups are algebras with one $n$-ary operation which is associative and invertible in a generalized sense. The notion of investigations of $n$-ary algebras goes back to Kasner’s lecture \cite{s5} at a  scientific meeting in 1904.   In 1928, Dorente wrote the first paper concerning the theory of $n$-ary groups \cite{s6}. Later on, Crombez and
Timm \cite{s7, s8} defined  the notion of the $(m, n)$-rings and their quotient structures. Mirvakili and Davvaz [20] defined $(m,n)$-hyperrings and obtained several results in this respect. In \cite{s9}, they introduced a generalization of the notion of a hypergroup in the sense of Marty and a generalization of an $n$-ary group,   which is called $n$-ary hypergroup.
 The $n$-ary structures  has been studied in  \cite{l1, l2, l3, ma, rev1}. Mirvakili and Davvaz \cite{cons} defined $(m,n)$-hyperrings and obtained several results in this respect.

One important class of hyperrings was introduced by Krasner, where the addition is a hyperoperation, while the multiplication is an ordinary binary operation, which is called Krasner hyperring.  In \cite{d1},  a generalization of the Krasner hyperrings, which is a subclass of $(m,n)$-hyperrings, was defined by Mirvakili and Davvaz. It is called Krasner $(m,n)$-hyperring. Ameri and Norouzi in \cite{sorc1} introduced some important
hyperideals such as Jacobson radical, n-ary prime and primary hyperideals, nilradical, and n-ary multiplicative subsets of Krasner $(m, n)$-hyperrings. Afterward, the notions of $(k,n)$-absorbing hyperideals and $(k,n)$-absorbing primary hyperideals were studied by Hila et. al. \cite{rev2}.

Norouzi et. al.  proposed and analysed a new defnition for normal hyperideals in Krasner $(m,n)$-hyperrings, with respect to that one given in \cite{d1} and they showed that these hyperideals correspond to strongly regular relations \cite{nour}. In \cite{rev1}, Ostadhadi-Dehkordi  and Davvaz
deﬁned the fundamental relation $\eta *$ on $R$ as the smallest equivalence relation on $R$ such that the quotient
$[R: \eta *]$ is an $(m, n)$-ring. Asadi and Ameri  introduced and studied direct limit of a direct system in the category of Krasner $(m,n)$-hyperrigs \cite{asadi}.

Dongsheng  defined the notion of $\delta$-primary ideals in  a commutative ring  where  $\delta$ is a function that assigns to each ideal $I$  an ideal $\delta(I)$ of the same ring \cite{bmb2}.  Moreover, in \cite{bmb3} he and his colleague  investigated 2-absorbing $\delta$-primary ideals  which  unify 2-absorbing ideals and 2-absorbing primary
ideals. Ozel Ay et al.  generalized the notion of $\delta$-primary  on Krasner hyperrings \cite{bmb4}. The concept of $\delta$-primary  hyperideals in Krasner $(m,n)$-hyperrings, which unifies the prime and primary hyperideals under one frame, was defined in \cite{mah3}. The notion of $J$-ideals  as a generalization of n-ideals  in
ordinary rings was studied by Khashan and Bani-ata in \cite{ata}.

Now in this paper, first we  define    the notion of n-ary $J$-hyperideals in a Krasner $(m,n)$-hyperring which is a generalization of $J$-ideals.   We give several characterizations of n-ary $J$-hyperideals. 
Afterward, we study the concept of n-ary $\delta$-$J$-hyperideals as an expansion of n-ary $J$-hyperideals. Several properties of them are provided. Moreover, we extend the notion of n-ary $\delta$-$J$-hyperideals to $(k,n)$-absorbing $\delta$-$J$-hyperideals.
\section{Preliminaries}
In this section we recall some definitions and results concerning $n$-ary hyperstructures which we need to develop our paper.\\
Let $H$ be a nonempty set and $P^*(H)$ be the
set of all the non-empty subsets of $H$.  Then the mapping $f : H^n \longrightarrow P^*(H)$
 is called an $n$-ary hyperoperation and the algebraic system $(H, f)$ is called an $n$-ary hypergroupoid. For non-empty subsets $A_1,..., A_n$ of $H$ we define

$f(A^n_1) = f(A_1,..., A_n) = \bigcup \{f(x^n_1) \ \vert \  x_i \in  A_i, i = 1,..., n \}$.\\
The sequence $x_i, x_{i+1},..., x_j$ 
will be denoted by $x^j_i$. For $j< i$, $x^j_i$ is the empty symbol. Using this notation,

$f(x_1,..., x_i, y_{i+1},..., y_j, z_{j+1},..., z_n)$ \\
will be written as $f(x^i_1, y^j_{i+1}, z^n_{j+1})$. The  expression will be written in the form $f(x^i_1, y^{(j-i)}, z^n_{j+1})$, when $y_{i+1} =... = y_j = y$ . 
 
 If for every $1 \leq i < j \leq n$ and all $x_1, x_2,..., x_{2n-1} \in H$, 

$f(x^{i-1}_1, f(x_i^{n+i-1}), x^{2n-1}_{n+i}) = f(x^{j-1}_1, f(x_j^{n+j-1}), x_{n+j}^{2n-1}),$ \\
then the n-ary hyperoperation $f$ is called associative. An $n$-ary hypergroupoid with the
associative $n$-ary hyperoperation is called an $n$-ary semihypergroup. 

An $n$-ary hypergroupoid $(H, f)$ in which the equation $b \in f(a_1^{i-1}, x_i, a_{ i+1}^n)$ has a solution $x_i \in H$
for every $a_1^{i-1}, a_{ i+1}^n,b  \in H$ and $1 \leq i \leq n$, is called an $n$-ary quasihypergroup, when $(H, f)$ is an $n$-ary
semihypergroup, $(H, f)$ is called an $n$-ary hypergroup.  

An $n$-ary hypergroupoid $(H, f)$ is commutative if for all $ \sigma \in \mathbb{S}_n$, the group of all permutations of $\{1, 2, 3,..., n\}$, and for every $a_1^n \in H$ we have $f(a_1,..., a_n) = f(a_{\sigma(1)},..., a_{\sigma(n)})$.
  If  $a_1^n \in H$ then we denote $a_{\sigma(1)}^{\sigma(n)}$ as the $(a_{\sigma(1)},..., a_{\sigma(n)})$.

If $f$ is an $n$-ary hyperoperation and $t = l(n- 1) + 1$, then $t$-ary hyperoperation $f_{(l)}$ is given by

$f_{(l)}(x_1^{l(n-1)+1}) = f(f(..., f(f(x^n _1), x_{n+1}^{2n -1}),...), x_{(l-1)(n-1)+1}^{l(n-1)+1})$. 
\begin{definition}
(\cite{d1}). Let $(H, f)$ be an $n$-ary hypergroup and $B$ be a non-empty subset of $H$. $B$ is called
an $n$-ary subhypergroup of $(H, f)$, if $f(x^n _1) \subseteq B$ for $x^n_ 1 \in B$, and the equation $b \in f(b^{i-1}_1, x_i, b^n _{i+1})$ has a solution $x_i \in B$ for every $b^{i-1}_1, b^n _{i+1}, b \in B$ and $1 \leq i  \leq n$.
An element $e \in H$ is called a scalar neutral element if $x = f(e^{(i-1)}, x, e^{(n-i)})$, for every $1 \leq i \leq n$ and
for every $x \in H$. 

An element $0$ of an $n$-ary semihypergroup $(H, g)$ is called a zero element if for every $x^n_2 \in H$ we have
$g(0, x^n _2) = g(x_2, 0, x^n_ 3) = ... = g(x^n _2, 0) = 0$.
If $0$ and $0^ \prime $are two zero elements, then $0 = g(0^ \prime , 0^{(n-1)}) = 0 ^ \prime$  and so the zero element is unique. 
\end{definition}
\begin{definition}
(\cite{l1}). Let $(H, f)$ be a  $n$-ary hypergroup. $(H, f)$ is called a canonical $n$-ary
hypergroup if\\
(1) there exists a unique $e \in H$, such that for every $x \in H, f(x, e^{(n-1)}) = x$;\\
(2) for all $x \in H$ there exists a unique $x^{-1} \in H$, such that $e \in f(x, x^{-1}, e^{(n-2)})$;\\
(3) if $x \in f(x^n _1)$, then for all $i$, we have $x_i \in  f(x, x^{-1},..., x^{-1}_{ i-1}, x^{-1}_ {i+1},..., x^{-1}_ n)$.

We say that $e$ is the scalar identity of $(H, f)$ and $x^{-1}$ is the inverse of $x$. Notice that the inverse of $e$ is $e$.
\end{definition}
\begin{definition}
(\cite{d1})A Krasner $(m, n)$-hyperring is an algebraic hyperstructure $(R, f, g)$, or simply $R$,  which
satisfies the following axioms:\\
(1) $(R, f$) is a canonical $m$-ary hypergroup;\\
(2) $(R, g)$ is a $n$-ary semigroup;\\
(3) the $n$-ary operation $g$ is distributive with respect to the $m$-ary hyperoperation $f$ , i.e., for every $a^{i-1}_1 , a^n_{ i+1}, x^m_ 1 \in R$, and $1 \leq i \leq n$,

$g(a^{i-1}_1, f(x^m _1 ), a^n _{i+1}) = f(g(a^{i-1}_1, x_1, a^n_{ i+1}),..., g(a^{i-1}_1, x_m, a^n_{ i+1}))$;\\
(4) $0$ is a zero element (absorbing element) of the $n$-ary operation $g$, i.e., for every $x^n_ 2 \in R$ we have 

$g(0, x^n _2) = g(x_2, 0, x^n _3) = ... = g(x^n_ 2, 0) = 0$.
\end{definition}
We assume throughout this paper that all Krasner $(m,n)$-hyperrings are commutative.

A non-empty subset $S$ of $R$ is called a subhyperring of $R$ if $(S, f, g)$ is a Krasner $(m, n)$-hyperring. Let
$I$ be a non-empty subset of $R$, we say that $I$ is a hyperideal of $(R, f, g)$ if $(I, f)$ is an $m$-ary subhypergroup
of $(R, f)$ and $g(x^{i-1}_1, I, x_{i+1}^n) \subseteq I$, for every $x^n _1 \in  R$ and  $1 \leq i \leq n$.

\begin{definition} (\cite{sorc1}) For every element $x$ in a Krasner $(m,n)$-hyperring $R$, the hyperideal generated by $x$ is denoted by $<x>$ and defined as follows:

$<x>=g(R,x,1^{(n-2)})=\{g(r,x,1^{(n-2)}) \ \vert \ r \in R\}$
\end{definition}
\begin{definition} (\cite{sorc1})
A hyperideal $M$ of a Krasner $(m, n)$-hyperring $R$
is said to be maximal if for every hyperideal $N$ of $R$, $M \subseteq N \subseteq R$ implies that $N=M$ or $N=R$.
\end{definition}
The Jacobson radical of a Krasner $(m, n)$-hyperring $R$
is the intersection of all maximal hyperideals of $R$ and it is denoted by $J_{(m,n)}(R)$. If $R$ does not have any maximal hyperideal, we let $J_{(m,n)}(R)=R$.

\begin{definition} (\cite{sorc1})
We say that an element $x \in  R$ is invertible  if there exists $y \in R$ such that $1_R=g(x,y,1_R^{(n-2)})$. Also,
the subset $U$ of $R$ is invertible  if and only if every element of $U$ is invertible .
\end{definition}
\begin{definition}
  (\cite{sorc1})A hyperideal $P$ of a Krasner $(m, n)$-hyperring $R$, such that $P \neq R$, is called a prime hyperideal if for hyperideals $U_1,..., U_n$ of $R$, $g(U_1^ n) \subseteq P$ implies that $U_1 \subseteq P$ or $U_2 \subseteq P$ or ...or $U_n \subseteq P$.
\end{definition}
\begin{lem} 
(Lemma 4.5 in \cite{sorc1})Let $P\neq R$ be a hyperideal of a Krasner $(m, n)$-hyperring $R$. Then $P$ is a prime hyperideal if for all $x^n_ 1 \in R$, $g(x^n_ 1) \in P$ implies that $x_1 \in P$ or ... or $x_n \in P$. 
\end{lem}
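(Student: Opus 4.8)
The plan is to prove the stated sufficiency directly, by deriving the defining (hyperideal-level) property of primeness from the element-wise hypothesis, arguing by contradiction. Thus I assume that $g(x_1^n)\in P$ forces $x_i\in P$ for some $i$, and I aim to show that for any hyperideals $U_1,\dots,U_n$ of $R$ with $g(U_1^n)\subseteq P$, at least one $U_i$ is contained in $P$.

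First I would fix hyperideals $U_1,\dots,U_n$ with $g(U_1^n)\subseteq P$ and suppose, for contradiction, that $U_i\not\subseteq P$ for every $i\in\{1,\dots,n\}$. Under this assumption each difference set $U_i\setminus P$ is nonempty, so I can pick an element $x_i\in U_i\setminus P$ for each $i$. Since the multiplication $g$ in a Krasner $(m,n)$-hyperring is an ordinary (single-valued) $n$-ary operation, $g(x_1^n)$ is a single element of $R$; and because $x_i\in U_i$ for every $i$, the definition of $g$ on subsets yields $g(x_1^n)\in g(U_1^n)$, whence $g(x_1^n)\in P$.

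Next I would invoke the element-wise hypothesis on $g(x_1^n)\in P$, which produces an index $j$ with $x_j\in P$. This contradicts the choice $x_j\in U_j\setminus P$. Consequently the assumption that no $U_i$ lies in $P$ is untenable, so some $U_i\subseteq P$; together with the given fact $P\neq R$, this is precisely the definition of a prime hyperideal, and the implication is proved.

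I do not anticipate a genuine obstacle here: the argument is a routine contradiction, and the only points deserving care are the remark that $g$ is an operation rather than a hyperoperation (so $g(x_1^n)$ is a point and the membership $g(x_1^n)\in g(U_1^n)$ is immediate) and the observation that the contradiction hypothesis is exactly what guarantees the sets $U_i\setminus P$ are nonempty so the $x_i$ can be selected. If one wished to upgrade the statement to a full characterization by also proving the converse — that a prime hyperideal satisfies the element-wise condition — the substantive work would shift there: given $g(x_1^n)\in P$ I would pass to the principal hyperideals $\langle x_i\rangle=\{g(r,x_i,1^{(n-2)}):r\in R\}$, use distributivity together with the associativity and commutativity of $g$ to verify $g(\langle x_1\rangle,\dots,\langle x_n\rangle)\subseteq P$, apply primeness to obtain $\langle x_j\rangle\subseteq P$ for some $j$, and conclude via $x_j=g(1,x_j,1^{(n-2)})\in\langle x_j\rangle\subseteq P$.
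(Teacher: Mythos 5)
Your proof of the stated direction is correct: assuming each $U_i\not\subseteq P$, choosing $x_i\in U_i\setminus P$, noting $g(x_1^n)\in g(U_1^n)\subseteq P$ (which is immediate since $g$ is a single-valued $n$-ary operation and $g(U_1^n)$ is defined as the set of such products), and contradicting the element-wise hypothesis is the standard argument. Note, however, that the paper itself offers no proof to compare against — this lemma is recalled verbatim as Lemma 4.5 of the cited Ameri–Norouzi paper and used as a preliminary — so your argument can only be judged on its own terms, and on those terms it is sound and is essentially the only natural route. One small remark on your optional converse sketch: it invokes principal hyperideals $\langle x_i\rangle=g(R,x_i,1^{(n-2)})$ and the identity $x_j=g(1_R,x_j,1^{(n-2)})$, which require $R$ to have a scalar identity $1_R$; the lemma as stated (and the main direction you proved) needs no such hypothesis.
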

\begin{definition} (\cite{sorc1}) Let $I$ be a hyperideal in a  Krasner $(m, n)$-hyperring $R$ with
scalar identity. The radical (or nilradical) of $I$, denoted by ${\sqrt I}^{(m,n)}$
is the hyperideal $\bigcap P$, where
the intersection is taken over all  prime hyperideals $P$ which contain $I$. If the set of all prime hyperideals containing $I$ is empty, then ${\sqrt I}^{(m,n)}$ is defined to be $R$.
\end{definition}
 Ameri and  Norouzi showed that if $x \in {\sqrt I}^{(m,n)}$ then 
 there exists $t \in \mathbb {N}$ such that $g(x^ {(t)} , 1_R^{(n-t)} ) \in I$ for $t \leq n$, or $g_{(l)} (x^ {(t)} ) \in I$ for $t = l(n-1) + 1$ \cite{sorc1}.
 
\begin{definition}
(\cite{sorc1}) A proper hyperideal $I$ in a  Krasner $(m, n)$-hyperring $R$ with the
scalar identity $1_R$ is said to be  primary if $g(x^n _1) \in I$ and $x_i \notin I$ implies that $g(x_1^{i-1}, 1_R, x_{ i+1}^n) \in {\sqrt I}^{(m,n)}$ for some $1 \leq i \leq n$.
\end{definition}
 If $I$ is a primary hyperideal in a  Krasner $(m, n)$-hyperring $R$ with the scalar identity $1_R$, then ${\sqrt I}^{(m,n)}$ is  prime. (Theorem 4.28 in \cite{sorc1})
\begin{definition} (\cite{sorc1})
Let $S$ be a hyperideal of a Krasner $(m, n)$-hyperring $(R, f, g)$. Then the set

$R/S = \{f(x^{i-1}_1, S, x^m_{i+1}) \ \vert \  x^{i-1}_1,x^m_{i+1} \in R \}$\\
endowed with m-ary hyperoperation $f$ which for all $x_{11}^{1m},...,x_{m1}^{mm} \in R$

$f(f(x_{11}^{1 (i-1)}, S, x^{1m}_ {1(i+1)}),..., f(x_{m1}^{ m(i-1)}, S, x^{mm}_ {m(i+1)}))$ 

$= f (f(x^{m1}_{11}),..., f(x^{m(i-1)}_{1(i-1)}), S, f(x^{m(i+1)}_{1(i+1)} ),..., f(x^{mm}_ {1m}))$\\
and with $n$-ary hyperoperation g which for all $x_{11}^{1m},...,x_{n1}^{nm} \in R$

$g(f(x_{11}^{1 (i-1)}, S, x^{1m}_ {1(i+1)}),..., f(x_{n1}^{ n(i-1)}, S, x^{nm}_ {n(i+1)}))$ 

$= f (g(x^{n1}_{11}),..., g(x^{n(i-1)}_{1(i-1)}), S, g(x^{n(i+1)}_{1(i+1)} ),..., f(x^{nm}_ {1m}))$\\
construct a Krasner $(m, n)$-hyperring, and $(R/S, f, g)$  is called the quotient Krasner $(m, n)$-hyperring of $R$ by $S$. 
\end{definition}
\begin{definition} (\cite{d1})
Let $(R_1, f_1, g_1)$ and $(R_2, f_2, g_2)$ be two Krasner $(m, n)$-hyperrings. A mapping
$h : R_1 \longrightarrow R_2$ is called a homomorphism if for all $x^m _1 \in R_1$ and $y^n_ 1 \in R_1$ we have

$h(f_1(x_1,..., x_m)) = f_2(h(x_1),...,h(x_m))$

$h(g_1(y_1,..., y_n)) = g_2(h(y_1),...,h(y_n)). $
\end{definition}
\begin{definition} (\cite{mah3})
Let $R$ be a Krasner $(m,n)$-hyperring. A function $\delta$  is called a hyperideal expansion of $R$  if  it assigns to each hyperideal $I$ of $R$  a hyperideal $\delta(I)$  of $R$ with   the following conditions:

$(i)$ $I \subseteq \delta(I)$.

$(ii)$ if $I \subseteq J$ for any hyperideals $I, J$ of $R$, then $\delta (I) \subseteq \delta (J)$ . 
\end{definition}
For example, let $R$ be a Krasner $(m,n)$-hyperring.

1. Define $\delta_0(I)=I$, for each  hyperideal $I$ of $R$. Then $\delta_0$ is a hyperideal expansion of $R$.

2. Define $\delta_1(I)=\sqrt{I}^{(m,n)}$,  for each  hyperideal $I$ of $R$. Then $\delta_1$ is a hyperideal expansion of $R$.

3. Define $\delta_R(I)=R$,   for each  hyperideal $I$ of $R$. Then $\delta_R$ is a hyperideal expansion of $R$.

4. Define $\delta_q(I/J)=\delta(I)/J$, for each  hyperideal $I$ of $R$ containing hyperideal $J$ and expansion function $\delta$ of $R$. Then $\delta_q$ is  a hyperideal expansion  of $R/J$.
\begin{definition}(\cite{mah3})
Let $(R_2,f_2,g_2)$ and  $(R_2,f_2,g_2)$ be two Krasner $(m,n)$-hyperrings and $h: R_1 \longrightarrow R_2$ a  hyperring homomorphism. Let $\delta$ and $\gamma$ be hyperideal expansions   of $R_1$ and
$R_2$, respectively. Then $h$ is said to be a $\delta \gamma$-homomorphism if $\delta(h^{-1}(I_2)) = h^{-1}(\gamma(I_2))$ for  hyperideal $I_2$ of
$R_2$. 
\end{definition}
 Note that $\gamma(h(I_1)=h(\delta(I_1)$ for   $\delta \gamma$-epimorphism $h$ and   for hyperideal $I_1$ of $R_1$ with $ Ker (h) \subseteq I_1$.
For example, let $(R_1,f_1,g_1)$ and $(R_2,f_2,g_2)$ be two Krasner $(m,n)$-hyperrings. If $\delta_1$ of $R_1$ and $\gamma_1$ of $R_2$ be the hyperideal expansions defined in Example 3.2, in (\cite{mah3}), then each homomorphism $h:R_1 \longrightarrow R_2$ is a $\delta_1 \gamma_1$-homomorphism.
\section{$n$-ary $J$-hyperideals}
\begin{definition} 
A hyperideal proper $Q$ of  a Krasner $(m,n)$-hyperring with the scalar identity $1_R$ is said to be n-ary $J$-hyperideal if whenever  $x_1^n \in R$ with $g(x_1^n) \in Q$ and $x_i  \notin J_{(m,n)}(R)$ implies that $g(x_1^{i-1},1_R,x_{i+1}^n) \in Q$.
\end{definition}
\begin{example}
The set $A=\{0,1,x\}$ with the following 3-ary hyperoeration$f$ and 3-ary operation $g$ is a Krasner $(3,3)$-hyperring such that $f$ and $g$ are commutative.
\[f(0,0,0)=0, \ \ \ f(0,0,1)=1, \ \ \ f(0,1,1)=1, \ \ \ f(1,1,1)=1, \ \ \ f(1,1,x)=A\]
\[f(0,1,x)=A, \ \ \ f(0,0,x)=x,\ \ \ f(0,x,x)=x,\ \ \ f(1,x,x)=A, \ \ \ f(x,x,x)=x\]
$\ \ \ g(1,1,1)=1,\ \ \ \ g(1,1,x)=g(1,x,x)=g(x,x,x)=x$\\

and for $x_1,x_2 \in R, g(0,x_1,x_2)=0$.
In the Krasner $(3,3)$-hyperring,  hyperideals $\{0 \}$ and $\{0,x\}$ are two n-ary $J$-hyperideals of $A$. 
\end{example}
\begin{example}
The set $R=\{0,1,\alpha,\beta\}$ whih following 2-hyperoperation $"\oplus"$ is a canonical 2-ary hypergroup.

\hspace{1.5cm}
\begin{tabular}{c|c} 
$\oplus$ & $0$  \ \ \  \  \ \ \  $1$  \ \ \ \ \ \ \ $\alpha$  \ \ \ \ \ \ \ $\beta$
\\ \hline 0 & $0$\ \ \  \  \ \ \ $1$\ \ \ \ \ \ \  \ \ $\alpha$ \ \ \ \ \ \ \ $\beta$ 
\\ $1$ & $1$ \ \ \  \  \ \ \  $A$ \ \ \ \ \  \  \ $\beta$ \ \ \ \ \ \ $B$
\\ $\alpha$ & $\alpha$ \ \ \  \  \ \ \  $\beta$ \ \ \ \ \ \ \ $0$ \ \ \ \ \ \ \ $1$
\\ $\beta$ & $\beta$ \ \ \  \  \ \ \  $B$ \ \ \ \ \ \ \ $1$ \ \ \ \ \ \ \ $A$
\end{tabular}

In which $A=\{0,1\}$ and $B=\{\alpha,\beta\}$. Define $g(a_1^n)=\alpha$ for $a_1,a_2,a_3,a_4 \in B$,  otherwise, $g(a_1^n)=0$.
It follows that $(R,\oplus,g)$ ia a Krasner (2,4)-hyperring. In the hyperring, $\{0\}$ is a 4-ary $J$-hyperideal.
\end{example}
\begin{theorem} \label{34}
Let $Q$ be an n-ary $J$-hyperideal of a Krasner $(m,n)$-hyperring $R$. Then $Q \subseteq J_{(m,n)}(R)$.
\end{theorem}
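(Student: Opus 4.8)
The plan is to argue by contradiction on a single element: I would take an arbitrary $x \in Q$ and show that the assumption $x \notin J_{(m,n)}(R)$ forces $Q = R$, contradicting properness of $Q$. The engine of the argument is that $1_R$ is a scalar identity, so that any element of $Q$ can be written as the output of $g$ with $1_R$ in every remaining slot, after which the defining implication of an $n$-ary $J$-hyperideal can be applied at the first index.

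First I would record the facts about the scalar identity that I need. For every $x \in R$ we have $g(x, 1_R^{(n-1)}) = x$ (this is the $i=1$ instance of the scalar-identity property), and in particular $g(1_R^{(n)}) = 1_R$. I would also note the standard consequence of the hyperideal axiom that $1_R \in Q$ forces $Q = R$: for any $r \in R$, the absorption property $g(r, Q, 1_R^{(n-2)}) \subseteq Q$ together with $g(r, 1_R^{(n-1)}) = r$ gives $r \in Q$.

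For the main step, fix $x \in Q$ and suppose toward a contradiction that $x \notin J_{(m,n)}(R)$. Since $g(x, 1_R^{(n-1)}) = x \in Q$, I set $x_1 = x$ and $x_2 = \cdots = x_n = 1_R$, so that $g(x_1^n) \in Q$. Because $x_1 = x \notin J_{(m,n)}(R)$, the definition of an $n$-ary $J$-hyperideal applied with $i=1$ (where the prefix $x_1^{i-1} = x_1^0$ is the empty symbol) yields $g(1_R, x_2^n) = g(1_R^{(n)}) = 1_R \in Q$. By the preliminary observation this gives $Q = R$, contradicting that $Q$ is proper. Hence $x \in J_{(m,n)}(R)$, and since $x$ was arbitrary, $Q \subseteq J_{(m,n)}(R)$.

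I do not expect any genuine obstacle: the entire content is the observation that the slot holding $x$ can serve as the ``non-radical'' coordinate while all other coordinates are $1_R$, so the definition collapses to $1_R \in Q$. The one point that must be verified rather than assumed is the implication $1_R \in Q \Rightarrow Q = R$, which is exactly where the hyperideal absorption axiom and the scalar-identity property are used; everything else is a direct substitution into the definition. Note also that if $R$ has no maximal hyperideal then $J_{(m,n)}(R) = R$ and the inclusion is immediate, so the contradiction argument is only needed when $J_{(m,n)}(R)$ is proper.
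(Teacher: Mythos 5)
Your proof is correct and is essentially the same argument as the paper's: both take $x \in Q \setminus J_{(m,n)}(R)$, note that $g(x,1_R^{(n-1)}) = x \in Q$, and apply the $J$-hyperideal condition to conclude $g(1_R^{(n)}) = 1_R \in Q$, contradicting properness of $Q$. The only difference is that you spell out the routine step $1_R \in Q \Rightarrow Q = R$, which the paper leaves implicit.
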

\begin{proof}
Let $Q$ be an n-ary $J$-hyperideal of a Krasner $(m,n)$-hyperring $R$ such that $Q \nsubseteq J_{(m,n)}(R)$. Suppose that  $x \in Q$ but $x \notin J_{(m,n)}(R)$. Since $Q$ is a n-ary $J$-hyperideal of $R$ and $g(x,1_R^{(n-1)}) \in Q$, then we have $g(1_R^{(n)}) \in Q$ which is a contradiction. Therefore, $Q \subseteq J_{(m,n)}(R)$.
\end{proof}
Next, we characterize the Krasner ${(m,n)}$-hyperring  which every proper hyperideal is an n-ary $J$-hyperideal.

\begin{theorem} \label{31}
Let $R$ be a Krasner ${(m,n)}$-hyperring.
Then $R$ is  local 
if and only if every proper hyperideal of $R$ is an n-ary $J$-hyperideal.  
\end{theorem}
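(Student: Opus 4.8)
The plan is to prove the two implications separately, using Theorem \ref{34} for the (easy) converse and the characterisation of the non-invertible elements of a local hyperring as the members of its unique maximal hyperideal for the (harder) forward direction. Throughout I rely on $R$ having the scalar identity $1_R$, so that $R$ possesses at least one maximal hyperideal.

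First I would dispatch the implication ``every proper hyperideal is an $n$-ary $J$-hyperideal $\Rightarrow R$ local''. Let $M$ be any maximal hyperideal of $R$. Since $M$ is proper it is, by hypothesis, an $n$-ary $J$-hyperideal, so Theorem \ref{34} yields $M \subseteq J_{(m,n)}(R)$. On the other hand $J_{(m,n)}(R)$ is by definition the intersection of all maximal hyperideals, whence $J_{(m,n)}(R) \subseteq M$. Thus $M = J_{(m,n)}(R)$ for \emph{every} maximal hyperideal $M$, so $R$ has a unique maximal hyperideal and is therefore local.

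For the converse, suppose $R$ is local with unique maximal hyperideal $M = J_{(m,n)}(R)$. The first step is to record that an element $x \in R$ lies outside $M$ exactly when it is invertible: if $x$ is non-invertible then $\langle x\rangle = g(R,x,1_R^{(n-2)})$ is a proper hyperideal, hence contained in the unique maximal hyperideal $M$, so $x \in M$; conversely an invertible element cannot lie in any proper hyperideal, since $g(x,y,1_R^{(n-2)}) = 1_R$ would force the ideal to be all of $R$. Now let $Q$ be an arbitrary proper hyperideal and suppose $g(x_1^n) \in Q$ with $x_i \notin J_{(m,n)}(R)$. By the previous remark $x_i$ is invertible, say $g(x_i, y, 1_R^{(n-2)}) = 1_R$ for some $y \in R$. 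Using commutativity and the generalized associativity of $g$ one rewrites
\[
g(x_1^{i-1}, 1_R, x_{i+1}^n) = g\bigl(x_1^{i-1}, g(x_i, y, 1_R^{(n-2)}), x_{i+1}^n\bigr) = g\bigl(g(x_1^n), y, 1_R^{(n-2)}\bigr),
\]
and since $g(x_1^n) \in Q$ and $Q$ is a hyperideal, the right-hand side lies in $Q$. Hence $g(x_1^{i-1}, 1_R, x_{i+1}^n) \in Q$, so $Q$ is an $n$-ary $J$-hyperideal.

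The two genuinely delicate points are both in the converse. The first is the identification of the set of non-invertible elements with $M$; this rests on the fact (valid because $R$ has a scalar identity) that every proper hyperideal is contained in a maximal one, which in the local case is forced to be $M$. The hard part will be the regrouping in the displayed line: I must justify, from the associativity axiom together with commutativity, that collapsing the $n$ entries $x_i, y, 1_R^{(n-2)}$ to their product $1_R$ and instead collapsing $g(x_1^n)$ together with the trailing factors yield the same value, since both two-level expressions act on the same multiset $\{x_1,\dots,x_n,\,y,\,1_R^{(n-2)}\}$ of $2n-1$ entries. This is the standard ``telescoping'' of an $(m,n)$-hyperring product, and it is where the $n$-ary bookkeeping has to be carried out with care.
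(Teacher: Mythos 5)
Your proof is correct and follows essentially the same route as the paper: the forward direction is the same argument (elements outside the unique maximal hyperideal are invertible, then regroup $g\bigl(x_1^{i-1}, g(x_i,x_i^{-1},1_R^{(n-2)}), x_{i+1}^n\bigr) = g\bigl(g(x_1^n), x_i^{-1}, 1_R^{(n-2)}\bigr) \in Q$ using associativity and commutativity), and you in fact justify the invertibility claim more carefully than the paper does. The only cosmetic difference is in the converse, where you apply Theorem \ref{34} directly to a maximal hyperideal $M$, while the paper reruns the same contradiction trick on the principal hyperideals $\langle x\rangle$ for $x \in M$; both yield $M \subseteq J_{(m,n)}(R)$ and hence $M = J_{(m,n)}(R)$.
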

\begin{proof}
$ \Longrightarrow$ Let  $M$ is the only maximal hyperideal of $R$. Then $J_{(m,n)}(R)=M$. Suppose that $Q$ is a proper hyperideal of $R$. Let for $x_1^n \in R$, $g(x_1^n) \in Q$ such that $x_i \notin M$. Therefore  $x_i$ is invertible. Then we have 
\begin{align*}
g(x_i^{-1},g(x_1^n),1_R^{(n-2)})&=g(g(x_i,x_i^{-1},1_R^{(n-2)}),g(x_1^{i-1},1_R,x_{i+1}^n),1_R^{(n-2)})\\
&=g(x_1^{i-1},1_R,x_{i+1}^n)\\
& \subseteq Q
\end{align*}
Hence, $Q$ is a an n-ary $J$-hyperideal of $R$. 

$\Longleftarrow$ Let every proper hyperideal of $R$ is an n-ary $J$-hyperideal. Assume that the hyperideal $M$ of $R$ is  maximal. Let $x \in M$. By the hypothesis, the principal hyperideal $\langle x \rangle$ is an n-ary $J$-hyperideal of $R$. Since  $g(x,1_R^{(n-1)}) \in \langle x \rangle$, then we get $x \in J_{(m,n)}(R)$ or $g(1_R^{(n)}) \in \langle x \rangle$. Since  the second case is  a contradiction, then $x \in J_{(m,n)}(R)$ which implies $J_{(m,n)}(R)=M$. Consequently, $R$ is a local Krasner $(m,n)$-hyperring.
\end{proof}

\begin{theorem} \label{entersection}
Let $\{Q_i\}_{i \in \Delta}$ be a  nonmepty set of n-ary $J$-hyperideals of  a Krasner $(m,n)$-hyperring $R$. Then $\bigcap_{i \in \Delta}Q_i$ is an n-ary $J$-hyperideal of $R$.
\end{theorem}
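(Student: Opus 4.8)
The plan is to set $Q = \bigcap_{\lambda \in \Delta} Q_\lambda$ and verify the two requirements in Definition of an $n$-ary $J$-hyperideal: that $Q$ is a proper hyperideal of $R$, and that it satisfies the defining implication. Throughout I keep the family index $\lambda \in \Delta$ strictly separate from the positional index $i$ used inside an $n$-tuple $x_1^n$, since conflating the two is the only genuine pitfall in the write-up.

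First I would establish that $Q$ is a proper hyperideal. That an arbitrary intersection of hyperideals is again a hyperideal is standard: $(Q, f)$ is an $m$-ary subhypergroup because each $(Q_\lambda, f)$ is and the closure and solvability conditions pass to the intersection, while the absorption condition $g(x_1^{i-1}, Q, x_{i+1}^n) \subseteq Q$ holds because the left-hand side is contained in every $Q_\lambda$. Properness is immediate: since $\Delta$ is nonempty, fix $\lambda_0 \in \Delta$; then $Q \subseteq Q_{\lambda_0} \subsetneq R$, so $Q \neq R$.

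The main step is the $J$-property itself. Suppose $x_1^n \in R$ satisfy $g(x_1^n) \in Q$ and $x_i \notin J_{(m,n)}(R)$ for some $1 \leq i \leq n$. Then $g(x_1^n) \in Q_\lambda$ for every $\lambda \in \Delta$, and since each $Q_\lambda$ is an $n$-ary $J$-hyperideal, its defining implication yields $g(x_1^{i-1}, 1_R, x_{i+1}^n) \in Q_\lambda$ for every $\lambda$. Intersecting over $\lambda$ gives $g(x_1^{i-1}, 1_R, x_{i+1}^n) \in \bigcap_{\lambda \in \Delta} Q_\lambda = Q$, which is exactly what the definition demands, so $Q$ is an $n$-ary $J$-hyperideal.

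I expect no serious obstacle; the argument is a direct transfer of the defining property of each $Q_\lambda$ to the intersection. The key observation making this work is that the Jacobson radical $J_{(m,n)}(R)$ is a single fixed hyperideal independent of $\lambda$, so the hypothesis $x_i \notin J_{(m,n)}(R)$ is simultaneously available for every member of the family, allowing the conclusion to be drawn uniformly and then intersected. The only care required is the notational one noted above.
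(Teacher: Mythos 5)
Your proof is correct and follows essentially the same route as the paper: pass from $g(x_1^n)\in\bigcap Q_\lambda$ to each $Q_\lambda$, apply the defining implication of each $n$-ary $J$-hyperideal, and intersect the conclusions. You are in fact slightly more careful than the paper's own proof, which omits the verification that the intersection is a proper hyperideal and even conflates the family index with the positional index $i$ — exactly the pitfall you flagged.
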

\begin{proof}
Let $g(x_1^n) \in  \bigcap_{i \in \Delta}Q_i$ for some $x_1^n \in R$ such that $x_i \notin J_{(m,n)}(R)$. Then $g(x_1^n) \in Q_i$ for every $i \in \Delta$. Since $Q_i$ is an n-ary $J$-hyperideal of $R$, we have $g(x_1^{i-1},1_R,x_{i-1}^n) \in Q_i$. Then $g(x_1^{i-1},1_R,x_{i-1}^n) \in \bigcap_{i \in \Delta}Q_i$.
\end{proof}
\begin{theorem} \label{36}
Let $Q$ be a proper hyperideal of  a Krasner $(m,n)$-hyperring $R$. Then the following statements are equivalent:

(1) $Q$ is an n-ary $J$-hyperideal of $R$.

(2) $Q=U_x$ where $U_x=\{y \in R \ \vert \ g(x,y,1_R^{(n-2)}) \in Q\}$  for every $x \notin J_{(m,n)}(R)$.

(3) $g(I_1^n) \subseteq Q$ for some hyperideals $I_1^n$  of $R$ and $I_i \nsubseteq J_{(m,n)}(R)$ imply that $g(I_1^{i-1},1_R,I_{i+1}^n) \subseteq Q$.

\end{theorem}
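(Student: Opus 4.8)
The plan is to prove the cycle of implications $(1) \Rightarrow (2) \Rightarrow (3) \Rightarrow (1)$, with every step resting on commutativity and generalized associativity of the $n$-ary operation $g$ and on the fact that $1_R$ is a scalar identity for $g$. For $(1) \Rightarrow (2)$, fix $x \notin J_{(m,n)}(R)$ and show $Q = U_x$. The inclusion $Q \subseteq U_x$ is immediate: if $y \in Q$, then the hyperideal absorption property gives $g(x, y, 1_R^{(n-2)}) \in Q$, so $y \in U_x$. For $U_x \subseteq Q$, take $y \in U_x$, so that $g(x, y, 1_R^{(n-2)}) \in Q$. Reading this as $g$ evaluated on the tuple $(x, y, 1_R^{(n-2)})$ whose first entry $x$ lies outside $J_{(m,n)}(R)$, the definition of an $n$-ary $J$-hyperideal applied at the index $i=1$ yields $g(1_R, y, 1_R^{(n-2)}) \in Q$, and since $1_R$ is the scalar identity this is exactly $y \in Q$.

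For $(2) \Rightarrow (3)$, suppose $g(I_1^n) \subseteq Q$ with $I_i \nsubseteq J_{(m,n)}(R)$, and choose $x_i \in I_i$ with $x_i \notin J_{(m,n)}(R)$, so that $Q = U_{x_i}$ by hypothesis. For arbitrary $a_j \in I_j$ with $j \neq i$, put $w = g(a_1^{i-1}, 1_R, a_{i+1}^n)$. The key computation is the flattening identity
\[ g(x_i, w, 1_R^{(n-2)}) = g(a_1^{i-1}, x_i, a_{i+1}^n), \]
whose right-hand side lies in $g(I_1^n) \subseteq Q$; hence $w \in U_{x_i} = Q$. Letting the $a_j$ range over $I_j$ gives $g(I_1^{i-1}, 1_R, I_{i+1}^n) \subseteq Q$.

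For $(3) \Rightarrow (1)$, suppose $g(x_1^n) \in Q$ with $x_i \notin J_{(m,n)}(R)$, and apply $(3)$ to the principal hyperideals $\langle x_1 \rangle, \ldots, \langle x_n \rangle$. First one checks that $g(\langle x_1 \rangle, \ldots, \langle x_n \rangle) \subseteq Q$: a generic generator is $g(g(r_1, x_1, 1_R^{(n-2)}), \ldots, g(r_n, x_n, 1_R^{(n-2)}))$, which by associativity and commutativity can be regrouped so that $g(x_1^n)$ occupies a single argument of an outer application of $g$, whence it lies in $Q$ by absorption. Since $x_i \in \langle x_i \rangle$ yet $x_i \notin J_{(m,n)}(R)$, we have $\langle x_i \rangle \nsubseteq J_{(m,n)}(R)$, so $(3)$ gives $g(\langle x_1 \rangle, \ldots, \langle x_{i-1} \rangle, 1_R, \langle x_{i+1} \rangle, \ldots, \langle x_n \rangle) \subseteq Q$; specializing each argument to $x_j \in \langle x_j \rangle$ (note $x_j = g(1_R, x_j, 1_R^{(n-2)}) \in \langle x_j \rangle$) yields $g(x_1^{i-1}, 1_R, x_{i+1}^n) \in Q$, which is the required conclusion.

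The main obstacle is the careful bookkeeping behind the two flattening reductions: the identity $g(x_i, w, 1_R^{(n-2)}) = g(a_1^{i-1}, x_i, a_{i+1}^n)$ in $(2) \Rightarrow (3)$, and the regrouping that exhibits $g(x_1^n)$ as one argument of an outer $g$ in $(3) \Rightarrow (1)$. Both require invoking the generalized associativity of $g$ (through the derived operations $g_{(l)}$) together with the scalar-identity property of $1_R$, in order to collapse the surplus copies of $1_R$ and to reorder the non-identity factors by commutativity; once these normalizations are justified, the hyperideal absorption property and the $J$-hyperideal definition finish each step routinely.
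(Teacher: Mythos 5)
Your proposal is correct and follows essentially the same route as the paper's proof: the same cycle $(1)\Rightarrow(2)\Rightarrow(3)\Rightarrow(1)$, with $Q\subseteq U_x$ by absorption and the $J$-hyperideal definition giving $U_x\subseteq Q$, then choosing $x_i\in I_i\setminus J_{(m,n)}(R)$ so that $Q=U_{x_i}$ absorbs $g(I_1^{i-1},1_R,I_{i+1}^n)$, and finally passing to the principal hyperideals $\langle x_1\rangle,\ldots,\langle x_n\rangle$. The only difference is that you spell out the associativity/scalar-identity flattening identities (and the containment $g(\langle x_1\rangle,\ldots,\langle x_n\rangle)\subseteq Q$) that the paper asserts without detail, which is a welcome addition but not a different argument.
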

\begin{proof}
$(1) \Longrightarrow (2)$ Let $Q$ be an n-ary  $J$-hyperideal of $R$.  We have  $Q \subseteq U_x$ for every $x \in R$. Suppose that $y \in U_x$ such that $x \notin J_{(m,n)}(R)$. This means $g(x,y,1_R^{(n-2)}) \in Q$. Since $Q$ is an n-ary $J$-hyperideal of $R$ and $x \notin J_{(m,n)}(R)$, then $y=g(y,1_R^{(n-2)}) \in Q$. Hence, we get $Q=U_x$.

$(2) \Longrightarrow (3)$ Let $g(I_1^n) \subseteq Q$ for some hyperideals $I_1^n$  of $R$ such that  $I_i \nsubseteq J_{(m,n)}(R)$. Take $x_i \in I_i$ such that $x_i \notin J_{(m,n)}(R)$. Hence, $g(I_1^{i-1},x_i, I_{I+1}^n) \subseteq Q$ which means $g(I_1^{i-1},1_R,I_{i+1}^n) \subseteq U_{x_i}$. Since $Q=U_{x_i}$ for every $x_i \notin J(R)$, then $g(I_1^{i-1},1_R,I_{i+1}^n) \subseteq Q$.

$(iii) \Longrightarrow (i)$ Let $g(x_1^n) \in  Q$ for some $x_1^n \in R$ with $x_i \notin J_{(m,n)}(R)$. We have $g(\langle x_1 \rangle,..., \langle x_n \rangle)=g( \langle g(x_1^n)  \rangle, 1_R^{(n-1)}) \subseteq Q$ but $\langle x_i \rangle \nsubseteq J_{(m,n)}(R)$. Then we get $g(\langle x_1 \rangle,..., \langle x_{i-1} \rangle,1_R, \langle x_{i+1} \rangle,...,\langle x_n \rangle)=g(\langle g(x_1^{i-1},1_R,x_{i+1}^n)\rangle,1^{(n-1)})\in Q$ which implies $g(x_1^{i-1},1_R,x_{i+1}^n) \in Q$. Therefore, $Q$ is an n-ary $J$-hyperideal of $R$.
\end{proof}
\begin{theorem}
Let $Q$ be a proper hyperideal of a Krasner $(m,n)$-hyperring $R$. Then $Q$ is an n-ary $J$-hyperideal of $R$ if and only if $U_x \subseteq J_{(m,n)}(R)$ where $U_x=\{y \in R \ \vert \ g(x,y,1_R^{(n-2)}\in Q\}$ for every $x \notin Q$.
\end{theorem}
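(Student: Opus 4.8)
The plan is to prove both implications by contraposition, using the scalar identity $1_R$ to collapse $n$-ary products onto the relevant slot. The one computational fact I will rely on repeatedly is the reabsorption identity
$$g\!\left(g(x_1^{i-1},1_R,x_{i+1}^n),\,x_i,\,1_R^{(n-2)}\right)=g(x_1^n),$$
valid for every $x_1^n\in R$ and every $1\le i\le n$. It follows from associativity of $g$, commutativity, and the fact that $g(a,1_R^{(n-1)})=a$ for all $a$: the inner product is the $n$-ary product of all the $x_j$ with $x_i$ replaced by $1_R$, and multiplying the outcome by $x_i$ (padding with $1_R$'s) merely reinstates $x_i$ in the $i$-th slot.

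For the forward implication I assume $Q$ is an $n$-ary $J$-hyperideal and fix $x\notin Q$. Taking an arbitrary $y\in U_x$, so that $g(x,y,1_R^{(n-2)})\in Q$, I argue that $y\in J_{(m,n)}(R)$. Indeed, if $y\notin J_{(m,n)}(R)$, then applying the defining property of an $n$-ary $J$-hyperideal to the product $g(x,y,1_R^{(n-2)})\in Q$ with $y$ occupying the second slot yields $g(x,1_R,1_R^{(n-2)})=g(x,1_R^{(n-1)})=x\in Q$, contradicting $x\notin Q$. Hence $y\in J_{(m,n)}(R)$, and therefore $U_x\subseteq J_{(m,n)}(R)$.

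For the converse I assume $U_x\subseteq J_{(m,n)}(R)$ for every $x\notin Q$, take $g(x_1^n)\in Q$ with $x_i\notin J_{(m,n)}(R)$, and set $z:=g(x_1^{i-1},1_R,x_{i+1}^n)$; the goal is $z\in Q$. Suppose not, so $z\notin Q$, whence $U_z\subseteq J_{(m,n)}(R)$ by hypothesis. The reabsorption identity gives $g(z,x_i,1_R^{(n-2)})=g(x_1^n)\in Q$, so $x_i\in U_z\subseteq J_{(m,n)}(R)$, contradicting $x_i\notin J_{(m,n)}(R)$. Hence $z\in Q$, and $Q$ is an $n$-ary $J$-hyperideal.

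The only genuinely delicate point is the reabsorption identity, where the $n$-ary associativity must be unwound carefully; everything else is a direct contrapositive argument. I expect that step to be the main obstacle, and I would either verify it by the binary-reduction trick (define $a\cdot b:=g(a,b,1_R^{(n-2)})$, check it is a commutative associative product with unit $1_R$ that represents $g$, so that both sides equal $\prod_j x_j$) or simply cite it as a routine consequence of the scalar identity together with the associativity and commutativity of $g$.
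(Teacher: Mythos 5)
Your proposal is correct and follows essentially the same route as the paper: the forward direction is the paper's contrapositive argument (if $y\notin J_{(m,n)}(R)$, the $J$-hyperideal property applied to $g(x,y,1_R^{(n-2)})\in Q$ forces $x=g(x,1_R^{(n-1)})\in Q$, a contradiction), and your converse, via $z=g(x_1^{i-1},1_R,x_{i+1}^n)$ and $x_i\in U_z\subseteq J_{(m,n)}(R)$, is word-for-word the paper's argument. The only difference is cosmetic: you state and justify the reabsorption identity $g\left(g(x_1^{i-1},1_R,x_{i+1}^n),x_i,1_R^{(n-2)}\right)=g(x_1^n)$ explicitly, whereas the paper uses it silently.
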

\begin{proof}
$\Longrightarrow$ Let $y \in U_x$ such that $x\notin Q$. So, $g(x,y,1^{(n-2)}) \in Q$. Then we have $y \in J(R)$ as $Q$ is an n-ary  $J$-hyperideal of $R$ and $x=g(x,1^{(n-2)}) \notin Q$ .

$\Longleftarrow $ Let $g(x_1^n) \in Q$ for some $x_1^n \in R$ such that $x_i \notin J_{(m,n)}(R)$. If $g(x_1^{i-1},1_R,x_{i+1}^n) \notin Q$, then $x_i \in U_{g(x_1^{i-1},1_R,x_{i+1}^n)} \subseteq J_{(m,n)}(R)$ which is a contradiction. Then we conclude that  $g(x_1^{i-1},1_R,x_{i+1}^n) \in Q$. Thus, $Q$ is an n-ary $J$-hyperideal of $R$.
\end{proof}
\begin{theorem} \label{33}
Let $Q$ be a hyperideal of a Krasner $(m,n)$-hyperring $R$ and  $S$ be a nonempty subset of $R$ such that $S \nsubseteq Q$. If $Q$ is an n-ary $J$-hyperideal of $R$, then $U_S=\{x \in R \ \vert \ g(x,S,1_R^{(n-2)}) \in Q\}$ is an n-ary $J$-hyperideal of $R$.
\end{theorem}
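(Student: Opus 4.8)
The plan is to verify three things in turn: that $U_S$ is a hyperideal, that it is proper, and that it satisfies the defining $J$-property. For the hyperideal part I would observe that $U_S = \bigcap_{s \in S} U_s$, where $U_s = \{x \in R \mid g(x,s,1_R^{(n-2)}) \in Q\}$ is the residual of $Q$ by $s$ (using commutativity of $g$ so that $g(x,s,1_R^{(n-2)}) = g(s,x,1_R^{(n-2)})$). Each $U_s$ is a hyperideal — closure under $f$ comes from the distributivity axiom together with $Q$ being a hyperideal, and this is already used implicitly in Theorem \ref{36} — and an arbitrary intersection of hyperideals is again a hyperideal. I would stress that one cannot shortcut this via Theorem \ref{entersection}, since when $s \in J_{(m,n)}(R)$ the individual $U_s$ need not be a $J$-hyperideal; only the plain hyperideal property is inherited by the intersection.

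For properness, I would argue that $U_S = R$ forces $1_R \in U_S$, and since $g(1_R, s, 1_R^{(n-2)}) = s$ for every $s$, the membership $1_R \in U_S$ is equivalent to $S \subseteq Q$. As the hypothesis supplies $S \nsubseteq Q$, we get $1_R \notin U_S$ and hence $U_S \neq R$.

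The heart of the argument is the $J$-property. Suppose $g(x_1^n) \in U_S$ with $x_i \notin J_{(m,n)}(R)$; I must show $g(x_1^{i-1}, 1_R, x_{i+1}^n) \in U_S$, i.e. $g(g(x_1^{i-1},1_R,x_{i+1}^n), s, 1_R^{(n-2)}) \in Q$ for each $s \in S$. Fix $s \in S$. By definition of $U_S$ we have $g(g(x_1^n), s, 1_R^{(n-2)}) \in Q$. Choosing a slot $j \neq i$ (possible since $n \ge 2$) and using commutativity and associativity of $g$, I would rewrite this element as a single application $g(y_1^n)$ with $y_i = x_i$ and the factor $s$ absorbed into $y_j$ together with the neutral $1_R$'s, for instance $y_j = g(x_j, s, 1_R^{(n-2)})$ and $y_k = x_k$ for $k \neq j$. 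Since $g(y_1^n) \in Q$, $y_i = x_i \notin J_{(m,n)}(R)$, and $Q$ is an $n$-ary $J$-hyperideal, this yields $g(y_1^{i-1}, 1_R, y_{i+1}^n) \in Q$. Re-expanding $y_j$ and regrouping again by commutativity and associativity, this last element equals $g(g(x_1^{i-1}, 1_R, x_{i+1}^n), s, 1_R^{(n-2)})$, which is therefore in $Q$. As $s \in S$ was arbitrary, I conclude $g(x_1^{i-1}, 1_R, x_{i+1}^n) \in U_S$.

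The main obstacle I anticipate is purely the index bookkeeping in these commutative–associative rearrangements: expressing $g(g(x_1^n), s, 1_R^{(n-2)})$ as one application of $g$ in which $x_i$ occupies the $i$-th slot while $s$ is packed elsewhere, and then matching the reduced expression back to $g(g(x_1^{i-1},1_R,x_{i+1}^n), s, 1_R^{(n-2)})$. Because $R$ is commutative and $g$ is associative, every such composite is determined by its multiset of genuine factors $\{x_1,\dots,x_n,s\}$ (the copies of $1_R$ being neutral), so all the needed identities collapse to the single observation that both sides equal the product $x_1\cdots x_{i-1}x_{i+1}\cdots x_n \cdot s$; the only real care is the degenerate case $i=1$, which is handled by taking $j=2$.
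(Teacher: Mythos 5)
Your proof is correct and takes essentially the same route as the paper's: properness via $1_R \notin U_S$ (otherwise $S \subseteq Q$), and then the $J$-property by using commutativity/associativity to rewrite $g(g(x_1^n),s,1_R^{(n-2)})$ so that $x_i$ occupies a single slot, applying the $J$-property of $Q$ to delete $x_i$, and rearranging back to $g(g(x_1^{i-1},1_R,x_{i+1}^n),s,1_R^{(n-2)})$ — the paper packs $s$ together with all $x_k$ ($k\neq i$) into one inner factor $g(x_1^{i-1},s,x_{i+1}^n)$, while you absorb $s$ into a slot $j\neq i$, which is a purely cosmetic difference. Your explicit check that $U_S$ is actually a hyperideal (via $U_S=\bigcap_{s\in S}U_s$) is a detail the paper asserts without proof, so it is a welcome addition rather than a deviation.
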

\begin{proof}
Let $U_S=R$. Then $1_R \in U_S$ which implies $S \subseteq Q$ a contradiction. Hence, $U_S$ is a proper hyperideal of $R$. Suppose that $g(x_1^n) \in U_S$ for some $x_1^n \in R$ such that $x_i \notin J_{(m,n)}(R)$. This means $g(g(x_1^n),S,1_R^{(n-2)})=\bigcup_{s \in S}g(g(x_1^n),s,1^{(n-2)}) \subseteq Q$ which implies for each $s \in S$, $g(g(x_1^n),s,1_R^{(n-2)})=g(g(x_1^{i-1},s,x_{i+1}^n),x_i,1_R^{(n-2)}) \in Q$. Then $g(x_1^{i-1},s,x_{i+1}^n)=g(g(x_1^{i-1},s,x_{i+1}^n),1^{(n-1)}) \in Q$ for all $s \in S$ as $Q$ is an n-ary $J$-hyperideal of $R$. This means $g(x_1^{i-1},1_R,x_{i+1}^n) \in U_S$. Thus, $U_S$ is an n-ary $J$-hyperideal of $R$.
\end{proof}
\begin{theorem} \label{35}
Let $Q$ be an n-ary $J$-hyperideal of a Krasner $(m,n)$-hyperring $R$ such that there is no n-ary $J$-hyperideal which contains $Q$ properly. Then $Q$ is an n-ary prime hyperideal of $R$. 
\end{theorem}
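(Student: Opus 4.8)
The plan is to verify the element-wise criterion for primeness recorded in the Lemma following Definition~2.8 (Lemma 4.5 of \cite{sorc1}): since $Q$ is proper, it suffices to show that whenever $g(x_1^n)\in Q$ at least one $x_i$ lies in $Q$. I will argue by contradiction, assuming $g(x_1^n)\in Q$ while $x_i\notin Q$ for every $i$, and deduce the impossible conclusion $x_1\in Q$.

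The engine of the argument is Theorem~\ref{33} combined with the maximality hypothesis. For any $x\notin Q$ the set $U_x=\{y\in R \ \vert \ g(x,y,1_R^{(n-2)})\in Q\}$ is an n-ary $J$-hyperideal of $R$ by Theorem~\ref{33} (taken with $S=\{x\}$), and it contains $Q$, because $g(q,x,1_R^{(n-2)})\in Q$ for every $q\in Q$ since $Q$ is a hyperideal. As $Q$ admits no n-ary $J$-hyperideal containing it properly, this forces $U_x=Q$ for every $x\notin Q$.

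I will then peel off the factors $x_n,x_{n-1},\dots,x_2$ one at a time. Writing $b_k=g(x_1^{n-k},1_R^{(k)})$, so that $b_0=g(x_1^n)\in Q$, a short computation using the associativity of $g$, the commutativity of $R$, and the scalar-identity law shows $g(b_k,x_{n-k+1},1_R^{(n-2)})=b_{k-1}$. Consequently, if $b_{k-1}\in Q$ and $x_{n-k+1}\notin Q$, then $b_k\in U_{x_{n-k+1}}=Q$. Starting from $b_0\in Q$ and using $x_n,\dots,x_2\notin Q$, an induction on $k$ produces $b_{n-1}=g(x_1,1_R^{(n-1)})=x_1\in Q$, contradicting $x_1\notin Q$. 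Hence some $x_i\in Q$, and $Q$ is n-ary prime.

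The one place demanding genuine care is the identity $g(b_k,x_{n-k+1},1_R^{(n-2)})=b_{k-1}$: one must re-bracket a product of length $2n-1$ via the generalized associativity of the $n$-ary operation and then absorb the surplus copies of $1_R$ using the scalar-identity law $g(1_R^{(i-1)},z,1_R^{(n-i)})=z$. The base instance $g(g(x_1^{n-1},1_R),x_n,1_R^{(n-2)})=g(x_1^n)$ already displays the full mechanism, and the inductive step is the same re-bracketing with additional identities carried along, so I expect no further obstruction once this reduction is set up cleanly.
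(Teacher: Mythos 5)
Your proposal is correct, and it takes a genuinely different---and in fact more careful---route than the paper's own proof. The paper also reduces to the element-wise prime criterion and also combines Theorem~\ref{33} with the maximality hypothesis, but it applies Theorem~\ref{33} \emph{once}, to the multi-element set $S=\{x_1,\dots,x_{n-1}\}$: it concludes $(Q:S)=U_S=Q$ and then asserts $x_n\in U_S$. That last step is a gap: by the paper's own convention (made explicit in the proof of Theorem~\ref{33}), $g(x_n,S,1_R^{(n-2)})=\bigcup_{s\in S}g(x_n,s,1_R^{(n-2)})$, so $x_n\in U_S$ would require $g(x_n,x_i,1_R^{(n-2)})\in Q$ for each individual $i\le n-1$, and this does not follow from the single hypothesis $g(x_1^n)\in Q$ (a product of all $n$ elements lying in $Q$ says nothing about the short two-factor products). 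Your iterated-singleton scheme avoids exactly this defect: you only ever invoke Theorem~\ref{33} for $U_x$ with a single $x\notin Q$, where it applies verbatim and where $Q\subseteq U_x$ forces $U_x=Q$ by maximality; then the peeling identity $g(b_k,x_{n-k+1},1_R^{(n-2)})=b_{k-1}$, which is a legitimate consequence of generalized associativity and the scalar identity law (bracket $x_{n-k+1}$ together with $n-1$ of the surplus copies of $1_R$; commutativity is not even needed), transports membership in $Q$ along the chain $b_0,b_1,\dots,b_{n-1}=x_1$ and yields the contradiction. In short, the two arguments share the same engine (colon-type sets being $J$-hyperideals plus maximality), but your one-factor-at-a-time induction is rigorous where the paper's one-shot application of Theorem~\ref{33} leaves an unproved claim; the only cost is the indexing bookkeeping, which you have set up correctly.
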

\begin{proof}
Assume that $Q$ is an n-ary $J$-hyperideal of a Krasner $(m,n)$-hyperring $R$ such that there is no n-ary $J$-hyperideal which contains $Q$ properly. Suppose that $g(x_1^n) \in Q$ for some $x_1^n \in R$ such that for every $1 \leq i \leq n-1$, $x_i \notin Q$. Put $S=\{x_1,...,x_{n-1}\}$. By Theorem \ref{33}, $(Q:S)$ is an n-ary  $J$-hyperideal of $R$. Since $Q \subseteq U_S=\{x \in R \ \vert \ g(x,S,1^{(n-2)}) \in Q\}$, we conclude that $x_n \in (Q:S)=Q$, by the hypothesis. Thus, $Q$ is a prime hyperideal.
\end{proof}
In Theorem \ref{35}, if $Q=J_{(m,n)}(R)$, then the inverse of the theorem is true. 
\begin{theorem}
Let $J_{(m,n)}(R)$  be an n-ary prime of $R$. Then $J_{(m,n)}(R)$ is an n-ary  $J$-hyperideal of $R$ such that there is no $J$-hyperideal which contains $J_{(m,n)}(R)$ properly.
\end{theorem}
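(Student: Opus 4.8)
The plan is to treat the two assertions separately, deriving the first from the primeness hypothesis and the second as an immediate consequence of Theorem \ref{34}. Throughout I would abbreviate $J = J_{(m,n)}(R)$. Since $J$ is assumed to be an n-ary prime hyperideal, it is in particular proper ($J \neq R$), so it is at least a legitimate candidate for being an n-ary $J$-hyperideal, and the final statement is not vacuous.

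First I would verify the defining condition directly. Suppose $x_1^n \in R$ satisfy $g(x_1^n) \in J$ together with $x_i \notin J$; the goal is to show $g(x_1^{i-1},1_R,x_{i+1}^n) \in J$. Because $J$ is n-ary prime, the elementwise characterization of prime hyperideals (Lemma 4.5 in \cite{sorc1}) applies to $g(x_1^n) \in J$ and yields an index $j$ with $x_j \in J$. As $x_i \notin J$, necessarily $j \neq i$, so the entry $x_j$ is left untouched when the $i$-th coordinate is replaced by $1_R$; that is, $x_j$ is still one of the arguments of the tuple $(x_1^{i-1},1_R,x_{i+1}^n)$. Since $J$ is a hyperideal, it absorbs the $n$-ary product $g$ in every slot, so having an argument in $J$ forces $g(x_1^{i-1},1_R,x_{i+1}^n) \in J$. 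This is exactly the condition in the definition of an n-ary $J$-hyperideal, hence $J$ is one.

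For the second assertion I would invoke Theorem \ref{34}, which guarantees that every n-ary $J$-hyperideal of $R$ is contained in $J_{(m,n)}(R)$. Thus, if $Q'$ is any n-ary $J$-hyperideal with $J \subseteq Q'$, then Theorem \ref{34} simultaneously gives $Q' \subseteq J$, forcing $Q' = J$. Therefore no n-ary $J$-hyperideal can contain $J$ properly, which together with the first part completes the proof. The only step carrying genuine content is the first one, and even there the difficulty is slight: the whole argument hinges on the single observation that primeness produces a factor lying in $J$ at an index distinct from $i$, after which hyperideal absorption finishes the matter at once; the second part is a purely formal consequence of Theorem \ref{34}, and the only point requiring care is noting that primeness ensures $J \neq R$.
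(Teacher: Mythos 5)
Your proposal is correct and follows essentially the same route as the paper: primeness of $J_{(m,n)}(R)$ yields an index $j \neq i$ with $x_j \in J_{(m,n)}(R)$, absorption by the hyperideal then gives $g(x_1^{i-1},1_R,x_{i+1}^n) \in J_{(m,n)}(R)$, and Theorem \ref{34} rules out any n-ary $J$-hyperideal properly containing $J_{(m,n)}(R)$. In fact you spell out the absorption step that the paper leaves implicit, so your write-up is slightly more complete than the original.
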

\begin{proof}
Put $Q=J_{(m,n)}(R)$ such that $J_{(m,n)}(R)$  is an n-ary prime of $R$. Let $g(x_1^n) \in Q$ for some $x_1^n \in R$ with $x_i \notin J(R)$. Since $Q$ is an n-ary prime hyperideal of $R$, then there exists $1 \leq j \leq i-1$ or $i+1 \leq j \leq n$ such that $x_j \in Q=J_{(m,n)}(R)$ which means the hyperideal $J(R)$ of $R$ is an n-ary $J$-hyperideal. By Theorem \ref{34},  there is no $J$-prime hyperideal which contains $Q$ properly.
\end{proof}
\section{$n$-ary $\delta$-$J$-hyperideals}
In this section, we define and study the concept of $n$-ary $\delta$-$J$-hyperideals as an expansion of $n$-ary $J$-hyperideals. 
\begin{definition}
Let $\delta$ be a hyperideal expansion of a Krasner $(m,n)$-hyperring $R$. A proper hyperideal $Q$ of $R$ is called  n-ary $\delta$-$J$-hyperideal if for $x_1^n \in R$, $g(x_1^n) \in Q$ implies that $x_i \in J_{(m,n)}(R)$ or $g(x_1^{i-1},1_R,x_{i+1}^n) \in \delta(Q).$
\end{definition}
\begin{example} 
The hyperideal $I=\{0\mathbb{Z}^\star_{12},4\mathbb{Z}^\star_{12}\}$ in $\mathbb{Z}_{12}/\mathbb{Z}_{12}^\star$ of Example 4.1 in \cite{rev2} is a $\delta_1$-$J$-hyperideal.
\end{example}
\begin{theorem} \label{0041}
Let $Q$ be a proper hyperideal of a Krasner $(m,n)$-hyperring $R$. If $\delta(Q)$ is an n-ary $J$-hyperideal of $R$, then $Q$ is an n-ary $\delta$-$J$-hyperideal of $R$.
\end{theorem}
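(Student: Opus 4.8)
The plan is to exploit the defining inclusion property of a hyperideal expansion, namely that $Q \subseteq \delta(Q)$, in order to transfer the $n$-ary $J$-hyperideal condition known to hold for $\delta(Q)$ down to the weaker $n$-ary $\delta$-$J$-hyperideal condition required of $Q$. Since $Q$ is proper by hypothesis, the only thing left to verify is the implication appearing in the definition of an $n$-ary $\delta$-$J$-hyperideal, so I would concentrate entirely on that.

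First I would fix an arbitrary $x_1^n \in R$ with $g(x_1^n) \in Q$, and an index $i$ with $x_i \notin J_{(m,n)}(R)$; the goal is then to show $g(x_1^{i-1},1_R,x_{i+1}^n) \in \delta(Q)$. Invoking condition $(i)$ in the definition of a hyperideal expansion (which guarantees $Q \subseteq \delta(Q)$), I immediately obtain $g(x_1^n) \in \delta(Q)$. At this point I would apply the hypothesis that $\delta(Q)$ is an $n$-ary $J$-hyperideal of $R$: from $g(x_1^n) \in \delta(Q)$ together with $x_i \notin J_{(m,n)}(R)$, the defining property of an $n$-ary $J$-hyperideal yields $g(x_1^{i-1},1_R,x_{i+1}^n) \in \delta(Q)$, which is precisely the conclusion sought.

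The whole argument rests on the dichotomy "either $x_i \in J_{(m,n)}(R)$ or not": in the first case the $\delta$-$J$-hyperideal condition is satisfied trivially by its left disjunct, and in the second case the computation above supplies the right disjunct. I do not anticipate any genuine obstacle, since the statement is a direct monotonicity/inclusion consequence of the two definitions; there is no delicate step, no case beyond the trivial dichotomy, and no computation inside the hyperoperations themselves. The only points worth flagging explicitly are that $Q$ stays proper (already part of the hypothesis) and that it is exactly the inclusion $Q \subseteq \delta(Q)$ which makes $g(x_1^n) \in \delta(Q)$ available, thereby licensing the use of the stronger $J$-hyperideal property enjoyed by $\delta(Q)$.
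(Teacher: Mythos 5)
Your proposal is correct and follows exactly the paper's own argument: pass from $g(x_1^n)\in Q$ to $g(x_1^n)\in\delta(Q)$ via the inclusion $Q\subseteq\delta(Q)$, then apply the $n$-ary $J$-hyperideal property of $\delta(Q)$ to obtain $g(x_1^{i-1},1_R,x_{i+1}^n)\in\delta(Q)$. There is no difference in substance between the two proofs.
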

\begin{proof}
Suppose that $\delta(Q)$ is an n-ary $J$-hyperideal of $R$. Let $g(x_1^n) \in Q$ for some $x_1^n \in R$ such that $x_i \notin J_{(m,n)}(R)$. Since $\delta(Q)$ is an n-ary $J$-hyperideal of $R$ and $Q \subseteq \delta(Q)$, then $g(x_1^{i-1},1_R,x_{i+1}^n) \in \delta(Q)$ which implies $Q$ is a $\delta$-$J$-hyperideal of $R$.
\end{proof}
The next Theorem shows that the inverse of Theorem \ref{0041} is true  $\delta=\delta_1$.
\begin{theorem}
Let $Q$ be a proper hyperideal of a Krasner $(m,n)$-hyperring $R$. If $Q$ is an n-ary $\delta_1$-$J$-hyperideal of $R$, then $\sqrt{Q}^{(m,n)}$ is an n-ary $J$-hyperideal of $R$.
\end{theorem}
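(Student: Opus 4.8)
The plan is to verify directly that $\sqrt{Q}^{(m,n)}$ satisfies the defining condition of an n-ary $J$-hyperideal. First I would record that $\sqrt{Q}^{(m,n)}$ is proper: if $1_R \in \sqrt{Q}^{(m,n)}$ then some power of $1_R$, which is again $1_R$, lies in $Q$, forcing $Q=R$ against the properness of $Q$. So take $x_1^n \in R$ with $g(x_1^n) \in \sqrt{Q}^{(m,n)}$ and $x_i \notin J_{(m,n)}(R)$; the goal is to produce $g(x_1^{i-1},1_R,x_{i+1}^n) \in \sqrt{Q}^{(m,n)}$.

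By the description of radical membership recalled in Section 2 (after the definition of $\sqrt{\cdot}^{(m,n)}$), the assumption $g(x_1^n) \in \sqrt{Q}^{(m,n)}$ yields a $t \in \mathbb{N}$ with $g\big((g(x_1^n))^{(t)},1_R^{(n-t)}\big) \in Q$, or the analogous $g_{(l)}$-statement when $t=l(n-1)+1$. Using commutativity and the generalized associativity of $g$, I would rearrange this power into the product of the individual powers of the entries, that is $g(x_1^{\langle t\rangle},\ldots,x_n^{\langle t\rangle}) \in Q$, where $x_j^{\langle t\rangle}$ denotes the $t$-th power $g(x_j^{(t)},1_R^{(n-t)})$ of $x_j$ and $1_R^{\langle t\rangle}=1_R$.

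The crucial auxiliary fact is that $J_{(m,n)}(R)$ is a radical hyperideal: it is the intersection of the maximal hyperideals of $R$, each of which is prime, and an intersection of prime (hence radical) hyperideals is radical. Consequently $x_i \notin J_{(m,n)}(R)$ forces $x_i^{\langle t\rangle} \notin J_{(m,n)}(R)$. Now I apply the n-ary $\delta_1$-$J$-hyperideal hypothesis to $g(x_1^{\langle t\rangle},\ldots,x_n^{\langle t\rangle}) \in Q$ at the $i$-th slot: since that entry is not in $J_{(m,n)}(R)$, we get $g(x_1^{\langle t\rangle},\ldots,x_{i-1}^{\langle t\rangle},1_R,x_{i+1}^{\langle t\rangle},\ldots,x_n^{\langle t\rangle}) \in \delta_1(Q)=\sqrt{Q}^{(m,n)}$. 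Reading the same commutativity identity backwards, this element is precisely the $t$-th power of $g(x_1^{i-1},1_R,x_{i+1}^n)$.

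Finally, $\sqrt{Q}^{(m,n)}$ is itself radical, since the prime hyperideals containing it coincide with those containing $Q$, so that taking radicals again returns $\sqrt{Q}^{(m,n)}$. Hence a power of $g(x_1^{i-1},1_R,x_{i+1}^n)$ lying in $\sqrt{Q}^{(m,n)}$ gives $g(x_1^{i-1},1_R,x_{i+1}^n) \in \sqrt{Q}^{(m,n)}$, which is exactly the n-ary $J$-hyperideal condition. The main obstacle I anticipate is the careful bookkeeping in the power-expansion step, namely matching arities and treating both the case $t \leq n$ and the case $t=l(n-1)+1$ via $g_{(l)}$, together with making precise in the Krasner $(m,n)$-hyperring setting the fact that $J_{(m,n)}(R)$ is radical (equivalently, that maximal hyperideals are prime).
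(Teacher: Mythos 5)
Your proof is correct and follows essentially the same route as the paper: extract the exponent $t$ from radical membership, rewrite the $t$-th power of $g(x_1^n)$ as a $g$-product, apply the $\delta_1$-$J$-hyperideal hypothesis at the $i$-th slot, rule out the $J_{(m,n)}(R)$ branch because $J_{(m,n)}(R)$ is radical, and conclude via $\sqrt{\sqrt{Q}^{(m,n)}}^{(m,n)}=\sqrt{Q}^{(m,n)}$. The only real difference is cosmetic: you factor into the $n$ individual powers $g(x_j^{(t)},1_R^{(n-t)})$ while the paper splits off $x_i^{(t)}$ against the block $g(x_1^{i-1},1_R,x_{i+1}^n)^{(t)}$ padded by $1_R^{(n-2t)}$ (your version incidentally avoids the paper's implicit constraint $2t\le n$), and you spell out the properness of $\sqrt{Q}^{(m,n)}$ and the reason $J_{(m,n)}(R)$ is radical, both of which the paper uses without comment.
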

\begin{proof}
Let for $x_1^n \in R$, $g(x_1^n)\in \sqrt{Q}^{(m,n)}$ such that $x_i \notin J_{(m,n)}(R)$. By $g(x_1^n)\in \sqrt{Q}^{(m,n)}$, it follows that there exists $t \in \mathbb{N}$ such that if $t \leq n$, then $g(g(x_1^n)^{(t)},1_R^{(n-t)}) \in Q$. Hence by associativity we get

$g(x_i^{(t)},g(x_i^{i-1},1_R,x_{i+1}^n)^{(t)},1_R^{(n-2t)})$

$=g(x_i^{(t)},g(x_i^{i-1},1_R,x_{i+1}^n)^{(t)},g(1_R^{(n)}),1_R^{(n-2t-1)})$

$=g(g(x_i^{(t)},1_R^{(n-t)}),g(x_i^{i-1},1_R,x_{i+1}^n)^{(t)},1_R^{(n-t-1)})$

$\subseteq Q$.\\
Since $Q$ is an n-ary $\delta_1$-$J$-hyperideal of $R$, then 

$g(x_i^{(t)},1^{(n-t)}) \in J_{(m,n)}(R)$ $\hspace{1cm}$\\
or

$g(g(x_i^{i-1},1_R,x_{i+1}^n)^{(t)},1_R^{(n-t)}) \in \delta_1(Q)=\sqrt{Q}^{(m,n)}$.\\
If $g(x_i^{(t)},1^{(n-t)}) \in J_{(m,n)}(R)$, then $x_i \in \sqrt{J_{(m,n)}(R)}^{(m,n)}=J_{(m,n)}(R)$ which is a contradiction. Then we have 

 $g(g(x_i^{i-1},1_R,x_{i+1}^n)^{(t)},1_R^{(n-t)}) \in \sqrt{Q}^{(m,n)}$\\
 which means $g(x_i^{i-1},1_R,x_{i+1}^n) \in \sqrt{Q}^{(m,n)}$. Thus we conclude that $\sqrt{Q}^{(m,n)}$ is an n-ary  $J$-hyperideal of $R$.
  If $t=l(n-1)+1$, then by using a similar argument, one can easily complete the proof.
\end{proof}
\begin{theorem}
Let $Q$ be a proper hyperideal of a Krasner $(m,n)$-hyperring $R$ and let   $\delta$ and $\gamma$ be two hyperideal expansions of $R$. If $\delta(Q)$ is an n-ary $\gamma$-$J$-hyperideal of $R$, then $Q$ is an n-ary $\gamma \circ \delta$-$J$-hyperideal of $R$.
\end{theorem}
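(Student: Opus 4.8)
The plan is to verify the defining condition of an $n$-ary $\gamma \circ \delta$-$J$-hyperideal for $Q$ directly, reducing everything to the hypothesis that $\delta(Q)$ is already an $n$-ary $\gamma$-$J$-hyperideal. Before the main argument I would first record that $\gamma \circ \delta$ is genuinely a hyperideal expansion of $R$, so that the statement is meaningful: for any hyperideal $I$ the inclusion axiom gives $I \subseteq \delta(I) \subseteq \gamma(\delta(I))$, and the monotonicity axiom gives that $I \subseteq K$ implies $\delta(I) \subseteq \delta(K)$ and hence $\gamma(\delta(I)) \subseteq \gamma(\delta(K))$. Thus $(\gamma \circ \delta)(Q) = \gamma(\delta(Q))$ is a well-defined hyperideal containing $Q$.

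For the core argument, suppose $g(x_1^n) \in Q$ for some $x_1^n \in R$ and fix an index $i$ with $x_i \notin J_{(m,n)}(R)$; the goal is to conclude $g(x_1^{i-1}, 1_R, x_{i+1}^n) \in \gamma(\delta(Q))$. The decisive step is the inclusion $Q \subseteq \delta(Q)$ coming from the expansion axiom, which upgrades $g(x_1^n) \in Q$ to $g(x_1^n) \in \delta(Q)$. Now I invoke the hypothesis that $\delta(Q)$ is an $n$-ary $\gamma$-$J$-hyperideal: its defining property applied to $g(x_1^n) \in \delta(Q)$ forces either $x_i \in J_{(m,n)}(R)$ or $g(x_1^{i-1}, 1_R, x_{i+1}^n) \in \gamma(\delta(Q))$. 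Since the first alternative is excluded by assumption, the second holds, which is exactly the conclusion sought.

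Assembling these observations shows that for every $x_1^n \in R$ with $g(x_1^n) \in Q$ one has $x_i \in J_{(m,n)}(R)$ or $g(x_1^{i-1}, 1_R, x_{i+1}^n) \in (\gamma \circ \delta)(Q)$, which is precisely the definition of $Q$ being an $n$-ary $\gamma \circ \delta$-$J$-hyperideal. I do not expect any genuine obstacle: this is a purely formal composition statement, and the only point requiring a moment's care is bookkeeping, namely that it is the stronger membership $g(x_1^n) \in \delta(Q)$, rather than $g(x_1^n) \in Q$ alone, that activates the $\gamma$-$J$-hyperideal property of $\delta(Q)$ and thereby yields the outer expansion $\gamma$ in the conclusion.
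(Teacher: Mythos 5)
Your proof is correct and follows exactly the same route as the paper: lift $g(x_1^n) \in Q$ to $\delta(Q)$ via the inclusion $Q \subseteq \delta(Q)$, then apply the $\gamma$-$J$-hyperideal property of $\delta(Q)$ to obtain $g(x_1^{i-1},1_R,x_{i+1}^n) \in \gamma(\delta(Q)) = (\gamma \circ \delta)(Q)$. Your preliminary check that $\gamma \circ \delta$ is itself a hyperideal expansion is a small bit of extra care the paper omits, but it does not change the argument.
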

\begin{proof}
Suppose that $\delta(Q)$ is an n-ary $\gamma$-$J$-hyperideal of $R$. Let $g(x_1^n) \in Q$ for some $x_1^n \in R$ such that $x_i \notin J_{(m,n)}(R)$. We get  $g(x_1^n) \in \delta(Q)$ as $Q \subseteq \delta(Q)$. Since $\delta(Q)$ is an n-ary $\gamma$-$J$-hyperideal of $R$ and $x_i \notin J_{(m,n)}(R)$, then $g(x_1^{i-1},1_R,x_{i+1}^n) \in \gamma (\delta(Q))$ which means $g(x_1^{i-1},1_R,x_{i+1}^n) \in \gamma \circ \delta(Q)$. Thus,  $Q$ is an n-ary $\gamma \circ \delta$-$J$-hyperideal of $R$.
\end{proof}
\begin{theorem}
 Let $Q_1,Q_2$ and $Q_3$ be three proper hyperideals of a Krasner $(m,n)$-hyperring $R$ such that $Q_1 \subseteq Q_2 \subseteq Q_3$. If $Q_3$ is an n-ary $\delta$-$J$-hyperideal of $R$ and $\delta(Q_1)=\delta(Q_3)$, then $Q_2$ is an n-ary $\delta$-$J$-hyperideal of $R$.
 \end{theorem}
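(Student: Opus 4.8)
The plan is to verify the defining condition of an $n$-ary $\delta$-$J$-hyperideal directly for $Q_2$, using $Q_3$ as a bridge and exploiting the monotonicity axiom of the expansion $\delta$. First I would take an arbitrary $x_1^n \in R$ with $g(x_1^n) \in Q_2$, with the goal of showing that either $x_i \in J_{(m,n)}(R)$ or $g(x_1^{i-1}, 1_R, x_{i+1}^n) \in \delta(Q_2)$. Since $Q_2 \subseteq Q_3$, the membership $g(x_1^n) \in Q_3$ follows at once, so the hypothesis that $Q_3$ is an $n$-ary $\delta$-$J$-hyperideal becomes available.

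Applying that hypothesis produces a dichotomy: either $x_i \in J_{(m,n)}(R)$, in which case the condition for $Q_2$ holds outright, or $g(x_1^{i-1}, 1_R, x_{i+1}^n) \in \delta(Q_3)$. The key is then to transfer this membership from $\delta(Q_3)$ to $\delta(Q_2)$. For this I would run a squeeze argument: from $Q_1 \subseteq Q_2 \subseteq Q_3$ together with condition $(ii)$ in the definition of a hyperideal expansion (monotonicity), I obtain $\delta(Q_1) \subseteq \delta(Q_2) \subseteq \delta(Q_3)$. Combining this chain with the assumed equality $\delta(Q_1) = \delta(Q_3)$ forces $\delta(Q_1) = \delta(Q_2) = \delta(Q_3)$; in particular $\delta(Q_2) = \delta(Q_3)$.

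With $\delta(Q_2) = \delta(Q_3)$ in hand, the second branch of the dichotomy gives $g(x_1^{i-1}, 1_R, x_{i+1}^n) \in \delta(Q_3) = \delta(Q_2)$, which is exactly the required conclusion. Since $Q_2$ is assumed to be proper, it indeed qualifies as an $n$-ary $\delta$-$J$-hyperideal of $R$.

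I do not expect a genuine obstacle here: the proof is a direct unwinding of the definition combined with the monotonicity of $\delta$. The only step needing any care is the collapse of the chain $\delta(Q_1) \subseteq \delta(Q_2) \subseteq \delta(Q_3)$ to an equality, i.e.\ the squeeze; the rest is routine bookkeeping with the $\delta$-$J$ condition.
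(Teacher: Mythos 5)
Your proof is correct and follows essentially the same route as the paper: pass from $g(x_1^n) \in Q_2$ to $Q_3$ via the inclusion $Q_2 \subseteq Q_3$, apply the $\delta$-$J$ property of $Q_3$, and then transfer membership from $\delta(Q_3)$ back to $\delta(Q_2)$ using $\delta(Q_1)=\delta(Q_3)$ together with monotonicity of $\delta$. The paper phrases the final step as $\delta(Q_3)=\delta(Q_1)\subseteq\delta(Q_2)$ rather than collapsing the whole chain to an equality, but this is the same argument.
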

 \begin{proof}
 Let $g(x_1^n) \in Q_2$ for some $x_1^n \in R$ such that $x_i \notin J_{(m,n)}(R)$. Since $Q_2 \subseteq Q_3$ and $Q_3$ is an n-ary $\delta$-$J$-hyperideal of $R$, then we have $g(x_1^{i-1},1_R,x_{i+1}^n) \in \delta(Q_3)$.  Then we conclude that $g(x_1^{i-1},1_R,x_{i+1}^n) \in \delta(Q_1)$, by the hypothesis. Since $Q_1 \subseteq Q_2$, then $\delta(Q_1) \subseteq \delta(Q_2)$. This implies that $g(x_1^{i-1},1_R,x_{i+1}^n) \in \delta(Q_2)$ as needed.
 \end{proof} 
 \begin{theorem}
Let $Q$ be a $\delta$-$J$-hyperideal of a Krasner $(m,n)$-hyperring $R$ such that  $\sqrt{\delta(Q)}^{(m,n)} \subseteq  \delta (\sqrt{Q}^{(m,n)})$. Then $\sqrt{Q}^{(m,n)}$ is a $\delta$-$J$-hyperideal of $R$. 
\end{theorem}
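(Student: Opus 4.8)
The plan is to verify the defining property of an $n$-ary $\delta$-$J$-hyperideal directly for $\sqrt{Q}^{(m,n)}$, imitating the folding argument used in the preceding $\delta_1$ theorem and then invoking the containment hypothesis $\sqrt{\delta(Q)}^{(m,n)} \subseteq \delta(\sqrt{Q}^{(m,n)})$ only at the very end. First I would record that $\sqrt{Q}^{(m,n)}$ is a proper hyperideal: since $Q$ is a $\delta$-$J$-hyperideal it is proper, hence (as $R$ has scalar identity $1_R$) it sits inside a maximal, and therefore prime, hyperideal, so its radical is proper as well.

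Next, fix $x_1^n \in R$ with $g(x_1^n) \in \sqrt{Q}^{(m,n)}$ and suppose $x_i \notin J_{(m,n)}(R)$; the goal is to show $g(x_1^{i-1}, 1_R, x_{i+1}^n) \in \delta(\sqrt{Q}^{(m,n)})$. Writing $a = g(x_1^{i-1}, 1_R, x_{i+1}^n)$, I would use the radical characterization of Ameri--Norouzi recalled in the Preliminaries to obtain $t \in \mathbb{N}$ with $g\big((g(x_1^n))^{(t)}, 1_R^{(n-t)}\big) \in Q$, treating the case $t \le n$ first. By associativity, commutativity and the identity $g(x_1^n) = g(x_i, a, 1_R^{(n-2)})$, this product rewrites --- exactly as in the $\delta_1$ proof --- in the folded form $g\big(g(x_i^{(t)}, 1_R^{(n-t)}), a^{(t)}, 1_R^{(n-t-1)}\big) \in Q$, which packages the $t$ copies of $x_i$ into a single slot.

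Now I would apply the $\delta$-$J$-hyperideal property of $Q$ to this $n$-ary product at the slot holding $c := g(x_i^{(t)}, 1_R^{(n-t)})$: either $c \in J_{(m,n)}(R)$ or $g(a^{(t)}, 1_R^{(n-t)}) \in \delta(Q)$. In the first case $c \in J_{(m,n)}(R)$ forces $x_i \in \sqrt{J_{(m,n)}(R)}^{(m,n)} = J_{(m,n)}(R)$, contradicting $x_i \notin J_{(m,n)}(R)$, so the second alternative must hold. From $g(a^{(t)}, 1_R^{(n-t)}) \in \delta(Q)$ the radical characterization gives $a \in \sqrt{\delta(Q)}^{(m,n)}$, and the hypothesis $\sqrt{\delta(Q)}^{(m,n)} \subseteq \delta(\sqrt{Q}^{(m,n)})$ then yields $a = g(x_1^{i-1}, 1_R, x_{i+1}^n) \in \delta(\sqrt{Q}^{(m,n)})$, which is precisely what is required.

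The routine but genuinely fiddly part is the rewriting/folding step and, in particular, the parallel case $t = l(n-1)+1$, where the single $n$-ary product must be replaced by the iterated operation $g_{(l)}$ and the same regrouping carried out; this is where I expect the main bookkeeping obstacle to lie, since one must keep careful track of how many copies of $1_R$ and how many factors of $x_i$ and $a$ appear. Everything else --- the two-case split, the use of $\sqrt{J_{(m,n)}(R)}^{(m,n)} = J_{(m,n)}(R)$, and the single application of the containment hypothesis --- is immediate once the folded identity is in place.
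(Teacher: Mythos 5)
Your proposal is correct and follows essentially the same route as the paper: the same folding of $g\bigl((g(x_1^n))^{(t)},1_R^{(n-t)}\bigr)$ into $g\bigl(g(x_i^{(t)},1_R^{(n-t)}),\,g(x_1^{i-1},1_R,x_{i+1}^n)^{(t)},\,1_R^{(n-t-1)}\bigr)$, the same two-case split via the $\delta$-$J$ property of $Q$ with the contradiction $x_i\in\sqrt{J_{(m,n)}(R)}^{(m,n)}=J_{(m,n)}(R)$, and the same final application of the hypothesis $\sqrt{\delta(Q)}^{(m,n)}\subseteq\delta(\sqrt{Q}^{(m,n)})$, with the case $t=l(n-1)+1$ deferred identically. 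Your additional remark that $\sqrt{Q}^{(m,n)}$ is proper is a small completeness bonus the paper omits.
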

\begin{proof}
Let for $x_1^n \in R$, $g(x_1^n)\in \sqrt{Q}^{(m,n)}$ such that $x_i \notin J_{(m,n)}(R)$. By $g(x_1^n)\in \sqrt{Q}^{(m,n)}$, it follows that there exists $t \in \mathbb{N}$ such that if $t \leq n$, then $g(g(x_1^n)^{(t)},1_R^{(n-t)}) \in Q$. Hence by associativity we get

$g(x_i^{(t)},g(x_i^{i-1},1_R,x_{i+1}^n)^{(t)},1_R^{(n-2t)})$

$=g(x_i^{(t)},g(x_i^{i-1},1_R,x_{i+1}^n)^{(t)},g(1_R^{(n)}),1_R^{(n-2t-1)})$

$=g(g(x_i^{(t)},1_R^{(n-t)}),g(x_i^{i-1},1_R,x_{i+1}^n)^{(t)},1_R^{(n-t-1)})$

$\subseteq Q$.\\
Since $Q$ is an n-ary $\delta$-$J$-hyperideal of $R$, then
 
$g(x_i^{(t)},1^{(n-t)}) \in J_{(m,n)}(R)$ 
or $\hspace{1cm}$ $\hspace{1cm}$
$g(g(x_i^{i-1},1_R,x_{i+1}^n)^{(t)},1_R^{(n-t)}) \in \delta(I)$.\\
If $g(x_i^{(t)},1^{(n-t)}) \in J_{(m,n)}(R)$, then $x_i \in \sqrt{J_{(m,n)}(R)}^{(m,n)}=J_{(m,n)}(R)$ which is a contradiction. Then we have 

 $g(g(x_i^{i-1},1_R,x_{i+1}^n)^{(t)},1_R^{(n-t)}) \in \delta(Q)$\\
 which means $g(x_i^{i-1},1_R,x_{i+1}^n) \in \sqrt{\delta(Q)}^{(m,n)}$. By the assumption, we obtain
 
  $g(x_i^{i-1},1_R,x_{i+1}^n) \in \delta(\sqrt{Q}^{(m,n)})$.  \\Thus we conclude that $\sqrt{Q}^{(m,n)}$ is a $\delta$-$J$-hyperideal of $R$.
  If $t=l(n-1)+1$, then by using a similar argument, one can easily complete the proof.
\end{proof}
We say that $\delta$  has the property of intersection preserving if it satisfies $\delta(I \cap J)=\delta(I) \cap \delta(J)$, for all hyperideals $I, J$ of $R$. For example, 
the hyperideal expansion  $\delta_1$  of a Krasner $(m,n)$-hyperring $R$ has the property of intersection preserving.
\begin{theorem}
Suppose that $Q_1^n$ are n-ary $\delta$-$J$-hyperideal of a Krasner $(m,n)$-hyperring $R$ and   the hyperideal expansion  $\delta$ of $R$ has the property of intersection preserving. Then $Q =\bigcap_{i=1}^n Q_i$ is an n-ary $\delta$-$J$-hyperideal of $R$.
\end{theorem}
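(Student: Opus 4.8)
The plan is to verify the two requirements in the definition of an $n$-ary $\delta$-$J$-hyperideal for $Q=\bigcap_{i=1}^n Q_i$: that $Q$ is proper, and that it satisfies the defining implication. First I would note that since each $Q_j$ is a proper hyperideal we have $1_R \notin Q_j$, hence $1_R \notin Q$, so $Q$ is a proper hyperideal of $R$ (the intersection of hyperideals is again a hyperideal).

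Next comes the core of the argument. Suppose $g(x_1^n) \in Q$ for some $x_1^n \in R$ and fix an index $i$ with $x_i \notin J_{(m,n)}(R)$. Since $Q \subseteq Q_j$ for every $j$, we have $g(x_1^n) \in Q_j$ for all $j$; as each $Q_j$ is an $n$-ary $\delta$-$J$-hyperideal and $x_i \notin J_{(m,n)}(R)$, the defining property forces $g(x_1^{i-1},1_R,x_{i+1}^n) \in \delta(Q_j)$ for every $j=1,\dots,n$. The point to stress is that the index $i$ is fixed by the hypothesis $x_i \notin J_{(m,n)}(R)$ and is the same for all $j$, so these memberships may be intersected over $j$.

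Then I would invoke the intersection-preserving property of $\delta$. A routine induction on the two-factor identity $\delta(I \cap J)=\delta(I)\cap\delta(J)$ gives $\bigcap_{j=1}^n \delta(Q_j)=\delta\big(\bigcap_{j=1}^n Q_j\big)=\delta(Q)$. Combining this with the previous step yields $g(x_1^{i-1},1_R,x_{i+1}^n)\in \bigcap_{j=1}^n\delta(Q_j)=\delta(Q)$, which is exactly the required conclusion; hence $Q$ is an $n$-ary $\delta$-$J$-hyperideal of $R$.

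The argument is essentially bookkeeping, so I do not expect a genuine obstacle. The only two points needing care are the extension of the intersection-preserving identity from two to $n$ hyperideals (a straightforward induction, legitimate because $\delta$ sends hyperideals to hyperideals and finite intersections of hyperideals are hyperideals) and the observation that the same coordinate $i$ works simultaneously for all the $Q_j$, which is precisely what makes the final intersection step valid.
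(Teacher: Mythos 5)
Your proof is correct and follows essentially the same approach as the paper: the paper argues the contrapositive (if $g(x_1^{i-1},1_R,x_{i+1}^n)\notin\delta(Q)=\bigcap_j\delta(Q_j)$, then some $Q_j$ forces $x_i\in J_{(m,n)}(R)$), while you run the same argument in the direct form, using exactly the same two ingredients — the $\delta$-$J$ property of each $Q_j$ and the identity $\delta\bigl(\bigcap_j Q_j\bigr)=\bigcap_j\delta(Q_j)$. If anything, you are slightly more careful than the paper, since you note explicitly that the two-factor intersection-preserving identity must be extended to $n$ factors by induction and that $Q$ is proper.
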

\begin{proof}
Let $g(x_1^n) \in Q$ for some $x_1^n \in R$ such that $g(x_1^{i-1},1_R,x_{i+1}^n) \notin \delta (Q)$. Since the hyperideal expansion  $\delta$ of $R$ has the property of intersection preserving, then there exists $1 \leq j \leq n$ such that $g(x_1^{i-1},1_R,x_{i+1}^n) \notin \delta (Q_j)$. Thus, we get $x_i \in J_{(m,n)}(R)$ as $Q_j$ ia an n-ary $\delta$-$J$-hyperideal of $R$. Consequently, $Q =\bigcap_{i=1}^n Q_i$ is an n-ary $\delta$-$J$-hyperideal of $R$.
\end{proof}
\begin{theorem} \label{41}
Let $Q$ be a proper hyperideal of a Krasner $(m,n)$-hyperring $R$. Then  the following 
are equivalent:

(1) $Q$ is an n-ary $\delta$-$J$-hyperideal of $R$.

(2) If $Q_1^{n-1}$ are some hyperideals  of $R$ and $x \in R$ such that $g(Q_1^{n-1},x) \subseteq Q$, then $x \in J_{(m,n)}(R)$ or $g(Q_1^{n-1},1_R) \subseteq \delta(Q)$.

(3) If $Q_1^n$ are some hyperideals of $R$ and  $g(Q_1^n) \subseteq Q$, then $Q_i \subseteq J_{(m,n)}(R)$ or  $g(Q_1^{i-1},1_R,Q_{i+1}^n) \subseteq \delta(Q).$
\end{theorem}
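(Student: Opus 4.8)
The plan is to prove the cycle $(1)\Rightarrow(3)\Rightarrow(2)\Rightarrow(1)$, since each of the three statements differs from its neighbour only by promoting elements to the principal hyperideals they generate, or conversely by testing an ideal-theoretic statement on single generators. Throughout I would use freely that $\langle x\rangle=g(R,x,1_R^{(n-2)})$ contains $x$ (take the generator $1_R$), that $x\in J_{(m,n)}(R)$ is equivalent to $\langle x\rangle\subseteq J_{(m,n)}(R)$, and that $Q$, being a hyperideal, absorbs multiplication by $g$.

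For $(1)\Rightarrow(3)$, I would fix an index $i$ and assume $g(Q_1^n)\subseteq Q$ with $Q_i\nsubseteq J_{(m,n)}(R)$. Choosing once and for all some $x_i\in Q_i$ with $x_i\notin J_{(m,n)}(R)$, I take arbitrary $x_j\in Q_j$ for $j\neq i$, so that $g(x_1^n)\in g(Q_1^n)\subseteq Q$. Definition~4.1 applied to this element (with the fixed $x_i\notin J_{(m,n)}(R)$) yields $g(x_1^{i-1},1_R,x_{i+1}^n)\in\delta(Q)$. The crucial observation is that the witnessing element $x_i$ has been replaced by $1_R$ in the conclusion, so the conclusion is independent of the choice of $x_i$; letting the remaining $x_j$ range over $Q_j$ then gives $g(Q_1^{i-1},1_R,Q_{i+1}^n)\subseteq\delta(Q)$, which is $(3)$.

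For $(3)\Rightarrow(2)$, suppose $g(Q_1^{n-1},x)\subseteq Q$. I would first upgrade the element $x$ to its principal hyperideal by showing $g(Q_1^{n-1},\langle x\rangle)\subseteq Q$: a typical element has the form $g(a_1^{n-1},g(r,x,1_R^{(n-2)}))$ with $a_j\in Q_j$ and $r\in R$, and by commutativity and $n$-ary associativity this equals $g(g(a_1^{n-1},x),r,1_R^{(n-2)})$, which lies in $Q$ because $g(a_1^{n-1},x)\in g(Q_1^{n-1},x)\subseteq Q$ and $Q$ is a hyperideal. Applying $(3)$ to the $n$ hyperideals $Q_1,\dots,Q_{n-1},\langle x\rangle$ at the index $i=n$ then gives $\langle x\rangle\subseteq J_{(m,n)}(R)$ or $g(Q_1^{n-1},1_R)\subseteq\delta(Q)$; translating the first alternative into $x\in J_{(m,n)}(R)$ yields exactly $(2)$. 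For $(2)\Rightarrow(1)$, given $g(x_1^n)\in Q$ with $x_i\notin J_{(m,n)}(R)$, I pass to the principal hyperideals $\langle x_j\rangle$ ($j\neq i$). Since $x_i\in\langle x_i\rangle$, one has $g(\langle x_1\rangle,\dots,\langle x_{i-1}\rangle,x_i,\langle x_{i+1}\rangle,\dots,\langle x_n\rangle)\subseteq g(\langle x_1\rangle,\dots,\langle x_n\rangle)=g(\langle g(x_1^n)\rangle,1_R^{(n-1)})\subseteq Q$, exactly as in the proof of Theorem~\ref{36} and using $g(x_1^n)\in Q$. Reordering by commutativity so that $x_i$ sits in the last slot, $(2)$ applies to the $n-1$ hyperideals $\langle x_j\rangle$ and the element $x_i$; as $x_i\notin J_{(m,n)}(R)$ it forces $g(\langle x_1\rangle,\dots,\langle x_{i-1}\rangle,1_R,\langle x_{i+1}\rangle,\dots,\langle x_n\rangle)\subseteq\delta(Q)$, and since each $x_j\in\langle x_j\rangle$ this gives $g(x_1^{i-1},1_R,x_{i+1}^n)\in\delta(Q)$, establishing $(1)$.

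The logical shuffling between the three formulations is routine; the step that requires genuine care is the bookkeeping with the $n$-ary operation in $(3)\Rightarrow(2)$ and $(2)\Rightarrow(1)$, namely the repeated use of commutativity, associativity and distributivity to show that multiplying by a principal hyperideal $\langle x\rangle=g(R,x,1_R^{(n-2)})$ amounts, modulo the absorbing hyperideal $Q$, to multiplying by the element $x$. I would isolate the identity $g(a_1^{n-1},g(r,x,1_R^{(n-2)}))=g(g(a_1^{n-1},x),r,1_R^{(n-2)})$ together with $g(\langle x_1\rangle,\dots,\langle x_n\rangle)=g(\langle g(x_1^n)\rangle,1_R^{(n-1)})$ as the two workhorses, and apply them in the two directions required.
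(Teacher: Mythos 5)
Your proposal is correct, but it traverses the equivalences in the opposite order from the paper: you prove $(1)\Rightarrow(3)\Rightarrow(2)\Rightarrow(1)$, whereas the paper proves $(1)\Rightarrow(2)\Rightarrow(3)\Rightarrow(1)$. The underlying tools are the same, namely the observation that the conclusion $g(x_1^{i-1},1_R,x_{i+1}^n)\in\delta(Q)$ does not mention the witness $x_i$, so a single element of $Q_i$ lying outside $J_{(m,n)}(R)$ settles the whole product of hyperideals, together with the principal-hyperideal identity $g(\langle x_1\rangle,\dots,\langle x_n\rangle)=g(\langle g(x_1^n)\rangle,1_R^{(n-1)})$ taken from the proof of Theorem \ref{36}; but the different orientation shifts where the work lies. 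In the paper, the passage from elements to hyperideals happens only once, in $(3)\Rightarrow(1)$, and the step $(2)\Rightarrow(3)$ is essentially free because restricting a hyperideal $Q_i$ to a chosen element $x\in Q_i$ with $x\notin J_{(m,n)}(R)$ is trivial. In your ordering, $(1)\Rightarrow(3)$ merges the paper's two easy steps into one, but $(3)\Rightarrow(2)$ then requires promoting the single element $x$ to $\langle x\rangle$, and you correctly pay for this with the absorption computation $g(a_1^{n-1},g(r,x,1_R^{(n-2)}))=g(g(a_1^{n-1},x),r,1_R^{(n-2)})\in Q$, a step the paper never needs. Both routes are sound and of comparable length; the paper's is marginally more economical, while yours isolates the useful auxiliary fact that $g(Q_1^{n-1},x)\subseteq Q$ already forces $g(Q_1^{n-1},\langle x\rangle)\subseteq Q$.
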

\begin{proof}
$(1) \Longrightarrow (2)$ Let $Q$ be an n-ary $\delta$-$J$-hyperideal of $R$. Assume that  $g(Q_1^{n-1},x) \subseteq Q$ for some hyperideals $Q_1^{n-1}$ of $R$ such that $x \notin J_{(m,n)}(R)$. Therefore, for each $q_i \in Q_i$ with $1 \leq i \leq n-1$ we have $g(q_1^{n-1},x) \in Q$. Since $Q$ is an n-ary $\delta$-$J$-hyperideal of $R$ and $x \notin J_{(m,n)}(R)$, then $g(q_1^{n-1},1_R) \in \delta(Q)$ which means $g(Q_1^{n-1},1_R) \subseteq Q$.

$(2) \Longrightarrow (3)$ Let $g(Q_1^n) \subseteq Q$ for some hyperideals $Q_1^n$ of $R$ such that $Q_i \nsubseteq J_{(m,n)}(R)$. Take $x \in Q_i$ but $x \notin J_{(m,n)}(R)$. Since $g(Q_1^{i-1},x,Q_{i+1}^n) \subseteq Q$ and $x \notin J_{(m,n)}$, then $g(Q_1^{i-1},1_R,Q_{i+1}^n) \subseteq \delta(Q)$, by the hypothesis.

 $(3) \Longrightarrow (1)$ Let  $g(x_1^n) \in Q$ for some  $x_1^n \in R$ but $x_i \notin J_{(m,n)}(R)$. Therefore $g(\langle x_1 \rangle,...,\langle x_n\rangle) \subseteq Q$ but $\langle x_i\rangle \nsubseteq J_{(m,n)}$. Thus we have 
 
 $g(\langle x_1 \rangle,...,\langle x_{i-1} \rangle,1_R,\langle x_{i+1} \rangle ,...,\langle x_n \rangle) \subseteq \delta(Q)$. \\This means $g(x_1^{i-1},1_R,x_{i+1}^n) \in \delta(Q)$.
 \end{proof}
 \begin{theorem} \label{42}
 Let $Q$ be a proper hyperideal of a Krasner $(m,n)$-hyperring $R$. Then  the following  are equivalent:
 
 (1) $Q$ is an n-ary $\delta$-$J$-hyperideal  of $R$.
 
 (2) $Q \subseteq J_{(m,n)}(R)$ and if $g(x_1^n) \in Q$ for some $x_1^n \in R$, then either $x_i$ is in the intersection of all maximal hyperideals of $R$ containing $Q$ or $g(x_1^{i-1},1_R,x_{i+1}^n) \in \delta(Q)$. 
 \end{theorem}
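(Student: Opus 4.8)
The plan is to establish both implications by reducing condition (2) to the very definition of an n-ary $\delta$-$J$-hyperideal, the bridge being the identification of $J_{(m,n)}(R)$ with the intersection of exactly those maximal hyperideals that contain $Q$. The only genuinely new ingredient relative to the definition is the clause $Q \subseteq J_{(m,n)}(R)$, so I would dispose of that first.

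The key preliminary observation I would record is the following: \emph{if} $Q \subseteq J_{(m,n)}(R)$, then every maximal hyperideal $M$ of $R$ contains $Q$. Indeed, $J_{(m,n)}(R)$ is by definition the intersection of all maximal hyperideals, so $J_{(m,n)}(R) \subseteq M$ for each such $M$, whence $Q \subseteq J_{(m,n)}(R) \subseteq M$. Consequently the family of maximal hyperideals containing $Q$ coincides with the family of all maximal hyperideals, and the intersection of the former equals $J_{(m,n)}(R)$. This is precisely what allows me to interchange the phrases ``$x_i \in J_{(m,n)}(R)$'' and ``$x_i$ lies in the intersection of all maximal hyperideals of $R$ containing $Q$''.

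For $(1) \Longrightarrow (2)$, I would first show $Q \subseteq J_{(m,n)}(R)$ by mirroring the argument of Theorem \ref{34}: for $x \in Q$ we have $g(x,1_R^{(n-1)}) = x \in Q$, and since $g(1_R^{(n)}) = 1_R \notin \delta(Q)$ when $\delta(Q)$ is proper, the defining dichotomy of a $\delta$-$J$-hyperideal forces $x \in J_{(m,n)}(R)$. Having established $Q \subseteq J_{(m,n)}(R)$, the preliminary observation rewrites the defining condition of $Q$ as exactly the statement in (2). For $(2) \Longrightarrow (1)$, condition (2) already supplies $Q \subseteq J_{(m,n)}(R)$, so by the same observation the intersection of all maximal hyperideals containing $Q$ equals $J_{(m,n)}(R)$; substituting this into the dichotomy in (2) recovers verbatim the definition of an n-ary $\delta$-$J$-hyperideal.

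I expect the main obstacle to be the step $Q \subseteq J_{(m,n)}(R)$ inside $(1) \Longrightarrow (2)$, since it genuinely requires $\delta(Q) \neq R$: otherwise $1_R \in \delta(Q)$ renders the dichotomy vacuous and the implication breaks down (for example when $\delta = \delta_R$, every proper hyperideal is a $\delta$-$J$-hyperideal yet need not lie in $J_{(m,n)}(R)$). I would therefore make the hypothesis $\delta(Q) \neq R$ explicit at this point; the rest of the proof is then a routine translation between the two formulations via the preliminary observation.
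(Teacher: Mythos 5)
Your proposal is correct and follows essentially the same route as the paper's own proof: both directions hinge on the observation that once $Q \subseteq J_{(m,n)}(R)$, every maximal hyperideal of $R$ contains $Q$, so the intersection appearing in (2) is exactly $J_{(m,n)}(R)$; and in $(1)\Rightarrow(2)$ the containment $Q \subseteq J_{(m,n)}(R)$ is extracted from $g(x,1_R^{(n-1)})=x\in Q$ just as in Theorem \ref{34}.

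The one place where you deviate is in fact a correction of the paper. The paper's proof of $(1)\Rightarrow(2)$ claims that $x\in Q\setminus J_{(m,n)}(R)$ forces $g(1_R^{(n)})\in Q$, ``a contradiction''; but the definition of an n-ary $\delta$-$J$-hyperideal only delivers $g(1_R^{(n)})\in\delta(Q)$, i.e.\ $\delta(Q)=R$, which is no contradiction unless $\delta(Q)$ is proper --- the paper silently conflates $\delta(Q)$ with $Q$ at this step. Your counterexample settles the matter: for $\delta=\delta_R$ every proper hyperideal satisfies (1) vacuously, so a maximal hyperideal $M_1$ of any Krasner $(m,n)$-hyperring with at least two maximal hyperideals is an n-ary $\delta_R$-$J$-hyperideal with $M_1\nsubseteq J_{(m,n)}(R)$, and (2) fails. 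So the theorem as printed is false without an additional assumption, and your explicit hypothesis $\delta(Q)\neq R$ (needed only for $(1)\Rightarrow(2)$; the converse direction works unconditionally) is the right repair.
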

 \begin{proof}
 $(1) \Longrightarrow (2)$ Suppose that $Q$ is an n-ary $\delta$-$J$-hyperideal  of $R$. Let $Q \nsubseteq J_{(m,n)}(R)$. Take $x \in Q$ such that $x \notin  J_{(m,n)}(R)$. Since $g(x,1^{(n-1)}) \in Q$, then $g(1^{(n)}) \in Q$, a contradiction. Hence, $Q \subseteq J_{(m,n)}(R)$. Since $J_{(m,n)}(R)$ is in  the intersection of all maximal hyperideals of $R$ containing $Q$, then the second assertion follows.
 
 $(2) \Longrightarrow (1)$ Let $g(x_1^n) \in Q$ for some $x_1^n \in R$ such that $x_i \notin J_{(m,n)}(R)$. The intersection of all maximal hyperideals of $R$ containing $Q$ is in $J_{(m,n)}(R)$ as $Q \subseteq J_{(m,n)}(R)$. This means $x_i$ is not in the intersection of all maximal hyperideals of $R$ containing $Q$. Then $g(x_1^{i-1},1_R,x_{i+1}^n) \in \delta(Q)$. Consequently, $Q$ is an n-ary $\delta$-$J$-hyperideal  of $R$.
 \end{proof}
\begin{theorem} \label{43}
Let $R$ be a Krasner $(m,n)$-hyperring. Then  the following  are equivalent:

(1) $R$ is a local Krasner $(m,n)$-hyperring such that $J_{(m,n)}(R)$ is the only maximal hyperideal of $R$.

(2) Every proper principal hyperideal is an n-ary  $\delta$-$J$-hyperideal  of $R$.

(3) Every proper hyperideal is an n-ary  $\delta$-$J$-hyperideal  of $R$.
\end{theorem}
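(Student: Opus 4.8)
The plan is to establish the cycle $(1)\Rightarrow(3)\Rightarrow(2)\Rightarrow(1)$, since $(3)\Rightarrow(2)$ is immediate while the remaining two implications each reduce to a single earlier result plus one elementary observation. The observation driving $(1)\Rightarrow(3)$ is that every $n$-ary $J$-hyperideal is automatically an $n$-ary $\delta$-$J$-hyperideal: if $g(x_1^n)\in Q$ and $x_i\notin J_{(m,n)}(R)$, then the $J$-hyperideal condition gives $g(x_1^{i-1},1_R,x_{i+1}^n)\in Q\subseteq\delta(Q)$, where the last inclusion is built into the definition of a hyperideal expansion. So for $(1)\Rightarrow(3)$ I would invoke Theorem \ref{31}: since $R$ is local with $J_{(m,n)}(R)$ its unique maximal hyperideal, every proper hyperideal of $R$ is an $n$-ary $J$-hyperideal, and hence by the observation an $n$-ary $\delta$-$J$-hyperideal.

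The implication $(3)\Rightarrow(2)$ is trivial, because a proper principal hyperideal is in particular a proper hyperideal. The substance lies in $(2)\Rightarrow(1)$, which I would argue as follows. Fix an arbitrary maximal hyperideal $M$ of $R$ and an element $x\in M$. First I would verify that $\langle x\rangle$ is proper: if $1_R\in\langle x\rangle=g(R,x,1_R^{(n-2)})$, then $x$ would be invertible, contradicting $x\in M\neq R$ (a proper hyperideal contains no unit). Thus $\langle x\rangle$ is a proper principal hyperideal, so by hypothesis $(2)$ it is an $n$-ary $\delta$-$J$-hyperideal; Theorem \ref{42} then forces $\langle x\rangle\subseteq J_{(m,n)}(R)$, and in particular $x\in\langle x\rangle\subseteq J_{(m,n)}(R)$. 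As $x\in M$ was arbitrary, $M\subseteq J_{(m,n)}(R)$, while $J_{(m,n)}(R)\subseteq M$ holds for every maximal hyperideal by definition of the Jacobson radical. Hence $M=J_{(m,n)}(R)$, and since $M$ was an arbitrary maximal hyperideal, $R$ possesses a unique maximal hyperideal, namely $J_{(m,n)}(R)$, i.e.\ $R$ is local as required.

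The main obstacle is the containment $\langle x\rangle\subseteq J_{(m,n)}(R)$ in $(2)\Rightarrow(1)$. A naive direct argument from the definition stalls: applying the $\delta$-$J$ condition to $g(x,1_R^{(n-1)})\in\langle x\rangle$ at the first coordinate only yields $1_R\in\delta(\langle x\rangle)$, which is not a contradiction for a general expansion $\delta$. This is precisely why I would route the step through Theorem \ref{42}, whose characterization records $Q\subseteq J_{(m,n)}(R)$ as an intrinsic feature of every $\delta$-$J$-hyperideal, thereby absorbing the dependence on $\delta$. Everything else is routine, so the proof reduces to assembling these three short implications.
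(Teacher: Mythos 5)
Your proposal is correct relative to the paper's earlier results, but it runs the equivalences around the opposite cycle, $(1)\Rightarrow(3)\Rightarrow(2)\Rightarrow(1)$, whereas the paper proves $(1)\Rightarrow(2)\Rightarrow(3)\Rightarrow(1)$. Concretely: the paper's $(1)\Rightarrow(2)$ does not cite Theorem \ref{31} but re-runs its argument --- if $g(x_1^n)\in\langle a\rangle$ and $x_i\notin J_{(m,n)}(R)$, locality makes $x_i$ invertible and $g(g(x_1^n),x_i^{-1},1_R^{(n-2)})=g(x_1^{i-1},1_R,x_{i+1}^n)\in\langle a\rangle\subseteq\delta(\langle a\rangle)$; its $(2)\Rightarrow(3)$ is the one genuinely new idea, passing from a proper hyperideal $P$ with $g(x_1^n)\in P$ to the principal hyperideal $Q=\langle g(x_1^n)\rangle\subseteq P$ and using monotonicity of the expansion, $\delta(Q)\subseteq\delta(P)$; and its $(3)\Rightarrow(1)$ applies Theorem \ref{42} to a maximal hyperideal $M$ itself, where properness is immediate. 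You instead get $(1)\Rightarrow(3)$ for free from Theorem \ref{31} together with the observation $Q\subseteq\delta(Q)$, take $(3)\Rightarrow(2)$ as trivial, and concentrate all the work in $(2)\Rightarrow(1)$ by applying Theorem \ref{42} to each $\langle x\rangle$ with $x\in M$, after the (necessary, and correctly supplied) check that $\langle x\rangle$ is proper. Your route is more economical: it reuses Theorem \ref{31} instead of silently reproving it, and it never needs the $\langle g(x_1^n)\rangle$ trick. The paper's route, in exchange, makes the reduction of arbitrary hyperideals to principal ones explicit, a fact that in your cycle is present only implicitly through $(2)\Rightarrow(1)\Rightarrow(3)$.

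One caution, which tells against the paper at least as much as against you: both proofs lean on Theorem \ref{42} for the passage back to (1), and the very subtlety you flag --- that the $\delta$-$J$ condition applied to $g(x,1_R^{(n-1)})$ yields only $1_R\in\delta(Q)$, ``not a contradiction for a general expansion $\delta$'' --- is precisely the error inside the paper's proof of Theorem \ref{42}: there, $(1)\Rightarrow(2)$ concludes $g(1_R^{(n)})\in Q$, whereas the definition of an n-ary $\delta$-$J$-hyperideal only gives $g(1_R^{(n)})\in\delta(Q)$. In fact Theorem \ref{42}, and with it Theorem \ref{43}, fails for arbitrary expansions: taking $\delta=\delta_R$, the disjunct $g(x_1^{i-1},1_R,x_{i+1}^n)\in\delta_R(Q)=R$ always holds, so every proper hyperideal of every Krasner $(m,n)$-hyperring is an n-ary $\delta_R$-$J$-hyperideal, while $R$ need not be local. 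So the containment $Q\subseteq J_{(m,n)}(R)$ is not an intrinsic feature of $\delta$-$J$-hyperideals; it requires a restriction on $\delta$ (for instance that $\delta(Q)$ be proper, as holds for $\delta_0$ and $\delta_1$). Your argument is therefore exactly as sound as the paper's --- both stand or fall with Theorem \ref{42} --- and you deserve credit for locating the weak point, even though you then trusted the cited theorem to have dispatched it.
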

\begin{proof}
 $(1) \Longrightarrow (2)$ Let $R$ be a local Krasner $(m,n)$-hyperring such that $J_{(m,n)}(R)$ is the only maximal hyperideal of $R$. Consider the principal hyperideal $Q=\langle a \rangle$ for some element $a$ of $R$ which is not invertible. Suppose that $g(x_1^n) \in Q$ such that $x_i \notin J_{(m,n)}(R)$. This means $x_i$ is  invertible  member of $R$. Then we have $g(g(x_1^n),x_i^{-1},1_R^{(n-2)})=g(g(x_1^{i-1},1_R,x_{i+1}^n),x_i,x_i^{-1},1_R^{(n-3)}) = g(x_1^{i-1},1_R,x_{i+1}^n) \subseteq Q \subseteq \delta(Q)$. Thus, $Q$ is an n-ary $\delta$-$J$-hyperideal of $R$.

$(2) \Longrightarrow (3)$ Suppose that $P$ is a proper hyperideal  of $R$ and $g(x_1^n) \in P$ for some $x_1^n \in R$ such that $x_i \notin J_{(m,n)}(R)$.  Consider the principal hyperideal $Q=\langle g(x_1^n) \rangle$. By the assumption, $Q$ is an n-ary  $\delta$-$J$-hyperideal  of $R$. Since  $ g(x_1^n) \in Q$ and $x_i \notin J_{(m,n)}(R)$, then $g(x_1^{i-1},1_R,x_{i+1}^n) \in \delta(Q)$. Since $Q \subseteq P$, then $\delta(Q) \subseteq \delta(P)$ and so $g(x_1^{i-1},1_R,x_{i+1}^n) \in \delta(P)$. This completes the proof.

 $(3) \Longrightarrow (1)$ Suppose that $M$ is a maximal hyperideal of $R$. By the hypothesis, $M$ is an n-ary $\delta$-$J$-hyperideal  of $R$. By theorem \ref{42}, we have $M \subseteq J_{(m,n)}(R)$. Since $J_{(m,n)}(R) \subseteq M$, then $M = J_{(m,n)}(R)$. This implies that $J_{(m,n)}(R)$ is the only maximal hyperideal of $R$. Thus, $R$ is  a local Krasner $(m,n)$-hyperring.
\end{proof}
Recall that a  proper hyperideal $I$ of $R$ is said to be  $\delta$-primary if
for all $x_1^n \in R$, $g(x_1^n) \in I$ implies that $x_i \in I$ or $g(x_1^{i-1},1_R,x_{i+1}^n) \in \delta(I)$ for some $1 \leq i \leq n$ \cite{mah3}.
\begin{theorem} \label{44}
Let $Q$ be a $\delta$-primary hyperideal of $R$. Then $Q$ is an n-ary $\delta$-$J$-hyperideal of $R$ if and only if $Q \subseteq J_{(m,n)}(R)$.
\end{theorem}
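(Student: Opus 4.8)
The plan is to establish the two implications of the biconditional separately, with the $\delta$-primary hypothesis on $Q$ in force throughout.

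For the direction $(\Rightarrow)$, I would not even need the $\delta$-primary assumption: the containment $Q\subseteq J_{(m,n)}(R)$ is a feature of \emph{every} n-ary $\delta$-$J$-hyperideal. Indeed, Theorem \ref{42} asserts the equivalence of ``$Q$ is an n-ary $\delta$-$J$-hyperideal'' with its condition (2), whose first clause is exactly $Q\subseteq J_{(m,n)}(R)$; so assuming $Q$ is an n-ary $\delta$-$J$-hyperideal yields $Q\subseteq J_{(m,n)}(R)$ immediately. If one prefers a self-contained argument, it suffices to suppose $x\in Q$ with $x\notin J_{(m,n)}(R)$ and apply the defining property to $g(x,1_R^{(n-1)})=x\in Q$; since $x\notin J_{(m,n)}(R)$ this forces $g(1_R^{(n)})=1_R\in\delta(Q)$, which is precisely the contradiction drawn in the proof of Theorem \ref{42}.

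For the direction $(\Leftarrow)$, I would assume in addition $Q\subseteq J_{(m,n)}(R)$ and verify the defining property directly. Let $g(x_1^n)\in Q$ with $x_i\notin J_{(m,n)}(R)$ for some index $i$. The one substantive observation is that the containment upgrades the first alternative of the $\delta$-primary definition: from $x_i\notin J_{(m,n)}(R)$ and $Q\subseteq J_{(m,n)}(R)$ we get $x_i\notin Q$. Since $Q$ is $\delta$-primary, at this index the alternative $x_i\in Q$ is ruled out, leaving $g(x_1^{i-1},1_R,x_{i+1}^n)\in\delta(Q)$. This is exactly what is required for $Q$ to be an n-ary $\delta$-$J$-hyperideal.

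The proof is short because the two notions have the identical logical shape, differing only in whether the ``escape'' clause reads $x_i\in Q$ or $x_i\in J_{(m,n)}(R)$; the inclusion $Q\subseteq J_{(m,n)}(R)$ transports one into the other. Consequently there is no real obstacle, and the only point deserving a moment's attention is to confirm that the quantifier over the index $i$ is handled consistently between the $\delta$-primary and the n-ary $\delta$-$J$ definitions, so that the replacement of $x_i\in Q$ by $x_i\in J_{(m,n)}(R)$ is legitimate for the relevant index.
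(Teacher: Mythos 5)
Your proposal is correct and takes essentially the same route as the paper: the paper proves $(\Rightarrow)$ by simply citing Theorem \ref{42}, and proves $(\Leftarrow)$ exactly as you do, using $Q \subseteq J_{(m,n)}(R)$ to turn $x_i \notin J_{(m,n)}(R)$ into $x_i \notin Q$ and then invoking the $\delta$-primary property to conclude $g(x_1^{i-1},1_R,x_{i+1}^n) \in \delta(Q)$. Your extra remarks (the self-contained argument for $(\Rightarrow)$, which reproduces the computation inside the paper's proof of Theorem \ref{42}, and the caveat about the index $i$) are elaborations of the same argument rather than a different approach.
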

\begin{proof}
$\Longrightarrow$ It follows by \ref{42}. 

$\Longleftarrow$ Let $Q \subseteq J_{(m,n)}(R)$. Suppose that $g(x_1^n) \in Q$ for some $x_1^n \in R$ such that $x_i \notin J_{(m,n)}(R)$. Then $x_i \notin Q$. Thus, we get $g(x_1^{i-1},1_R,x_{i+1}^n) \in \delta(Q)$ as $Q$ is a $\delta$-primary hyperideal of $R$. This implies that $Q$ is an n-ary $\delta$-$J$-hyperideal of $R$.
\end{proof}
\begin{theorem}
Let $Q$ be a maximal hyperideal of $R$. Then $Q$ is an n-ary $\delta$-$J$-hyperideal of $R$ if and only if $Q = J_{(m,n)}(R)$.
\end{theorem}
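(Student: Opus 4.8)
The plan is to prove both implications by leaning on the two structural characterizations already established, namely Theorem \ref{42} and Theorem \ref{43}, together with the elementary fact that $J_{(m,n)}(R)$ is the intersection of all maximal hyperideals of $R$.

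For the forward implication, suppose the maximal hyperideal $Q$ is an $n$-ary $\delta$-$J$-hyperideal of $R$. First I would invoke the equivalence $(1) \Leftrightarrow (2)$ of Theorem \ref{42}, whose condition $(2)$ forces $Q \subseteq J_{(m,n)}(R)$. For the reverse containment I would use that $Q$, being maximal, is one of the hyperideals over which the intersection defining $J_{(m,n)}(R)$ is taken, so that $J_{(m,n)}(R) = \bigcap_{M} M \subseteq Q$. Combining the two containments yields $Q = J_{(m,n)}(R)$.

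For the converse, assume $Q = J_{(m,n)}(R)$ with $Q$ maximal. The key preliminary observation is that this forces $R$ to be local with $Q$ as its unique maximal hyperideal: for any maximal hyperideal $M$ one has $J_{(m,n)}(R) \subseteq M$, hence $Q \subseteq M$, and the maximality of $Q$ together with the properness of $M$ gives $Q = M$. Thus hypothesis $(1)$ of Theorem \ref{43} holds, and the equivalence $(1) \Leftrightarrow (3)$ there shows that every proper hyperideal of $R$ is an $n$-ary $\delta$-$J$-hyperideal; in particular $Q$ is. Alternatively, one may argue directly: whenever $g(x_1^n) \in Q$ with $x_i \notin J_{(m,n)}(R) = Q$, locality of $R$ makes $x_i$ invertible, so that $g(x_1^{i-1},1_R,x_{i+1}^n) = g(g(x_1^n),x_i^{-1},1_R^{(n-2)}) \subseteq Q \subseteq \delta(Q)$, exactly as in the proof of Theorem \ref{43}.

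I do not expect a genuine obstacle here, since the statement is essentially a corollary of Theorems \ref{42} and \ref{43}. The only points requiring care are the double-containment argument in the forward direction, where one must remember that maximality of $Q$ supplies $J_{(m,n)}(R) \subseteq Q$ while the $\delta$-$J$-hyperideal hypothesis supplies the opposite containment, and, in the converse, the verification that a maximal $Q$ coinciding with the Jacobson radical actually makes $R$ local so that Theorem \ref{43} applies.
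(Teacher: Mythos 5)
Your proposal is correct, and in both directions it is actually tighter than the paper's own argument. For the forward implication the paper first verifies that a maximal $\delta$-$J$-hyperideal $Q$ is $\delta$-primary (using $J_{(m,n)}(R)\subseteq Q$ to convert $x_i\notin Q$ into $x_i\notin J_{(m,n)}(R)$) and then cites Theorem \ref{44} to obtain $Q\subseteq J_{(m,n)}(R)$; but the forward half of Theorem \ref{44} is itself proved by appealing to Theorem \ref{42}, so your direct use of Theorem \ref{42}(2), combined with the containment $J_{(m,n)}(R)\subseteq Q$ supplied by maximality, reaches the same conclusion while skipping the $\delta$-primary detour. For the converse the paper writes only ``It is obvious,'' whereas you give an actual argument: $Q=J_{(m,n)}(R)$ maximal forces every maximal hyperideal $M$ to satisfy $Q\subseteq M\subsetneq R$, hence $M=Q$, so $R$ is local and Theorem \ref{43} (or the invertibility computation used in its proof) applies; this genuinely fills a step the paper left unjustified. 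Two small caveats: your locality route silently uses the same unproved fact the paper relies on in Theorems \ref{31} and \ref{43}, namely that in a local Krasner $(m,n)$-hyperring every element outside the unique maximal hyperideal is invertible; and since $g$ is an $n$-ary operation (not a hyperoperation), $g(g(x_1^n),x_i^{-1},1_R^{(n-2)})$ is an element, so the correct symbol is $\in Q$ rather than $\subseteq Q$ --- a notational slip the paper itself also makes.
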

\begin{proof}
$\Longrightarrow$ Let $Q$ be an n-ary $\delta$-$J$-hyperideal of $R$. Since $Q$ is a maximal hyperideal of $R$, then $J_{(m,n)}(R) \subseteq Q$. Let $g(x_1^n) \in Q$ for some $x_1^n \in R$ such that $x_i \notin Q$. Then $x_i \notin J_{(m,n)}(R)$. Since $Q$ is an n-ary $\delta$-$J$-hyperideal of $R$ and $x_i \notin J_{(m,n)}(R)$, we conclude that $g(x_1^{i-1},1_R,x_{i+1}^n) \in \delta(Q)$. This implies that $Q$ is a $\delta$-primary hyperideal of $R$. Therefore, we get $Q \subseteq J_{(m,n)}(R)$, by Theorem \ref{44}. Hence $Q = J_{(m,n)}(R)$.

$\Longleftarrow$ It is obvious.
\end{proof}

\begin{theorem} \label{45}
Let $(R_1,f_1,g_1)$ and $(R_2,f_2,g_2)$ be two Krasner $(m,n)$-hyperrings and $ h:R_1 \longrightarrow R_2$ be a $\delta \gamma$-homomorphism such that $\delta$ and  $\gamma$ be hyperideal expansions of $R_1$ and $R_1$, respectively. Then the following statements hold :

$(1)$ If $h$ is a monomorphism and $I_2$ is an n-ary  $\gamma$-$J_2$-hyperideal of $R_2$, then $h^{-1} (I_2)$ is an n-ary  $\delta$-$J_1$-hyperideal of $R_1$.

 $(2)$ Let $h $ be an epimorphism and $I_1$ be a hyperideal of $R$ such that  $Ker(h) \subseteq I_1$. If  $I_1$ is an n-ary  $\delta$-$J_1$-hyperideal of $R_1$, then $h(I_1)$ is an n-ary $\gamma$-$J_2$-hyperideal of $R_2$.
\end{theorem}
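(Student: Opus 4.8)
The plan is to prove both parts directly from the defining condition of an $n$-ary $\delta$-$J$-hyperideal, transporting the condition $g(x_1^n)\in Q$ across $h$ and then re-interpreting the expansion through the two identities attached to a $\delta\gamma$-homomorphism: the defining one $\delta(h^{-1}(I_2))=h^{-1}(\gamma(I_2))$ and its epimorphism companion $\gamma(h(I_1))=h(\delta(I_1))$, valid when $\operatorname{Ker}(h)\subseteq I_1$. Two standing preliminaries are needed first: that $h(1_{R_1})=1_{R_2}$ (automatic for the epimorphism in $(2)$, since $h(1_{R_1})$ acts as a scalar identity on $R_2$ and these are unique; assumed for the homomorphism in $(1)$), so that $h^{-1}(I_2)$ and $h(I_1)$ are again \emph{proper} hyperideals; and the behaviour of the Jacobson radical under $h$, which is the crux and is discussed at the end.

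For part $(1)$, I would first note that $h^{-1}(I_2)$ is a hyperideal of $R_1$, and it is proper because $1_{R_2}\notin I_2$ forces $1_{R_1}\notin h^{-1}(I_2)$. Now suppose $g_1(x_1^n)\in h^{-1}(I_2)$ with $x_i\notin J_{(m,n)}(R_1)$. Applying $h$ and using that it is a homomorphism gives $g_2(h(x_1),\dots,h(x_n))=h(g_1(x_1^n))\in I_2$. The key sub-step is to pass from $x_i\notin J_{(m,n)}(R_1)$ to $h(x_i)\notin J_{(m,n)}(R_2)$; granting this, the hypothesis that $I_2$ is an $n$-ary $\gamma$-$J_2$-hyperideal yields $g_2(h(x_1)^{i-1},1_{R_2},h(x_{i+1})^n)\in\gamma(I_2)$. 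Since $h(1_{R_1})=1_{R_2}$, the left-hand element equals $h\big(g_1(x_1^{i-1},1_{R_1},x_{i+1}^n)\big)$, so $g_1(x_1^{i-1},1_{R_1},x_{i+1}^n)\in h^{-1}(\gamma(I_2))$. The $\delta\gamma$-homomorphism identity rewrites this as $g_1(x_1^{i-1},1_{R_1},x_{i+1}^n)\in\delta(h^{-1}(I_2))$, which is exactly the required conclusion.

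Part $(2)$ runs dually. First $h(I_1)$ is a proper hyperideal of $R_2$ (using surjectivity, $\operatorname{Ker}(h)\subseteq I_1$, and that $1_{R_2}\in h(I_1)$ would force $1_{R_1}\in I_1$). Take $g_2(y_1^n)\in h(I_1)$ with $y_i\notin J_{(m,n)}(R_2)$. By surjectivity write $y_j=h(x_j)$; then $h(g_1(x_1^n))=g_2(y_1^n)\in h(I_1)$, and because $\operatorname{Ker}(h)\subseteq I_1$ this lifts to $g_1(x_1^n)\in I_1$. Transferring $y_i\notin J_{(m,n)}(R_2)$ back to $x_i\notin J_{(m,n)}(R_1)$ (again the Jacobson-radical step, but now in the surjective direction), the hypothesis that $I_1$ is an $n$-ary $\delta$-$J_1$-hyperideal gives $g_1(x_1^{i-1},1_{R_1},x_{i+1}^n)\in\delta(I_1)$. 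Applying $h$ and using the epimorphism identity $h(\delta(I_1))=\gamma(h(I_1))$ produces $g_2(y_1^{i-1},1_{R_2},y_{i+1}^n)\in\gamma(h(I_1))$, so $h(I_1)$ is an $n$-ary $\gamma$-$J_2$-hyperideal.

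The main obstacle in both parts is the transfer of the condition ``$\notin J_{(m,n)}$'' across $h$: part $(1)$ needs the preimage containment $h^{-1}(J_{(m,n)}(R_2))\subseteq J_{(m,n)}(R_1)$, while part $(2)$ needs the image containment $h(J_{(m,n)}(R_1))\subseteq J_{(m,n)}(R_2)$. The surjective case is the benign one: maximal hyperideals of $R_2$ pull back to maximal hyperideals of $R_1$ containing $\operatorname{Ker}(h)$, whence $J_{(m,n)}(R_1)\subseteq h^{-1}(J_{(m,n)}(R_2))$, which is precisely the image containment. The injective case is more delicate, since maximal hyperideals of $R_2$ need not contract to maximal hyperideals of $R_1$; here I would either restrict to homomorphisms for which this contraction is well behaved or exploit the $\delta\gamma$-structure directly, and this is the step I expect to demand the most care.
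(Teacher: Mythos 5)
Your strategy is the same as the paper's in both parts: push the relation $g(x_1^n)\in Q$ across $h$, invoke the $J$-hyperideal hypothesis on the other side, and convert expansions using $\delta(h^{-1}(I_2))=h^{-1}(\gamma(I_2))$ in one direction and $\gamma(h(I_1))=h(\delta(I_1))$ (valid for epimorphisms with $Ker(h)\subseteq I_1$) in the other. Your part $(2)$ is complete and coincides step by step with the paper's proof: surjective lift $y_j=h(x_j)$, the kernel condition to pull $g_2(y_1^n)\in h(I_1)$ back to $g_1(x_1^n)\in I_1$, the (correct) transfer $h(J_{(m,n)}(R_1))\subseteq J_{(m,n)}(R_2)$ via contraction of maximal hyperideals along a surjection, and the push-forward into $\gamma(h(I_1))$. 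You are in fact more careful than the paper, which never checks properness of $h^{-1}(I_2)$ or $h(I_1)$, nor that $h(1_{R_1})=1_{R_2}$.

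The step you leave open in part $(1)$ --- that a monomorphism satisfies $h^{-1}(J_{(m,n)}(R_2))\subseteq J_{(m,n)}(R_1)$ --- is a genuine gap in your write-up, but it is exactly the point where the paper's own proof is defective: the paper disposes of it with the words ``$h(x_i)\in J_{(m,n)}(R_2)$, which follows $x_i\in J_{(m,n)}(R_1)$ as $h$ is a monomorphism,'' offering no argument. Your suspicion that no such argument exists is justified. Ordinary commutative rings are Krasner $(2,2)$-hyperrings, and the inclusion $h:\mathbb{Z}\hookrightarrow\mathbb{Z}_{(p)}$ is a monomorphism and a $\delta_0\gamma_0$-homomorphism for the identity expansions; the hyperideal $I_2=p\mathbb{Z}_{(p)}=J(\mathbb{Z}_{(p)})$ is prime, hence an $n$-ary $\gamma_0$-$J$-hyperideal of $\mathbb{Z}_{(p)}$, yet $h^{-1}(I_2)=p\mathbb{Z}$ is not a $\delta_0$-$J$-hyperideal of $\mathbb{Z}$, since $J(\mathbb{Z})=\{0\}$ while $p\cdot 1\in p\mathbb{Z}$ with $p\notin\{0\}$ and $1\notin p\mathbb{Z}$. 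So part $(1)$ as stated is false for general monomorphisms; it becomes true (and your argument closes immediately) only under an added hypothesis such as $h^{-1}(J_{(m,n)}(R_2))\subseteq J_{(m,n)}(R_1)$, e.g.\ when $h$ is an isomorphism. In short, your proposal reconstructs the paper's proof of $(2)$ correctly, and the hole you honestly flagged in $(1)$ is not a defect of your attempt but of the theorem and the paper's proof.
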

\begin{proof}
$(1)$ Let for $x_1^n \in R_1$, $g_1(x_1^n) \in h^{-1} (I_2)$. Then we get $h(g_1(x_1^n))=g_2(h(x_1^n)) \in I_2$. Since $I_2$ is a $\gamma$-$J_2$-hyperideal of $R_2$, it implies that either $h(x_i) \in J_{(m,n)}(R_2)$ which follows $x_i \in  J_{(m,n)}(R_1)$ as $h$ is a monomorphism, or 

$g_2(h(x_1),...,h(x_{i-1}),1_{R_2},h(x_{i+1}),...,h(x_n))$

$=
h(g_1(x_1^{i-1},1_{R_1},x_{i+1}^n))$

$ \in \gamma(I_2)$,\\
which follows $g_1(x_1^{i-1},1_{R_1},x_{i+1}^n) \in h^{-1}(\gamma(I_2))=\delta(h^{-1}(I_2)$. Thus $h^{-1}(I_2)$ is a $\delta$-$J_1$-hyperideal of $R_1$.

$(2)$ Let for $y_1^n \in R_2$, $g_2(y_1^n) \in h(I_1)$ such that $y_i \notin J_{(m,n)}(R_2)$. Since $h$ is an epimorphism, then there exist $x_1^n \in R_1$ such that $h(x_1)=y_1,...,h(x_n)=y_n$. Hence

$h(g_1(x_1^n))=g_2(h(x_1),...,h(x_n))=g_2(y_1^n) \in h(I_1)$.\\
Since $Ker(h) \subseteq I_1$, then we get $g_1(x_1^n) \in I_1$. Since $y_i \notin J_{(m,n)}(R_2)$, then $x_i \notin J_{(m,n)}(R_1)$. Since $I_1$ is a $\delta$-$J_1$-hyperideal of $R_1$ and $x_i \notin J_{(m,n)}(R_1)$, it implies that  $g_1(x_1^{i-1},1_{R_1},x_{i+1}^n) \in \delta(I_1)$ which implies 

$h(g_1(x_1^{i-1},1_{R_1},x_{i+1}^n))
=g_2(h(x_1),...,h(x_{i-1}),1_{R_2},h(x_{i+1}),...,h(x_n))$

$\hspace{3.4cm}=g_2(y_1^{i-1},1_{R_2},y_{i+1}^n)$

$\hspace{3.4cm}\in h(\delta(I_1))$

$\hspace{3.4cm}=\gamma(h(I_1))$\\
Thus $h(I_1)$ is a $\gamma$-$J_2$-hyperideal of $R_2$.
\end{proof}
\begin{corollary} \label{311}
Let $Q$ and $J$ be hyperideals of a Krasner $(m,n)$-hyperring $R$ such that  $I \subseteq Q$. If $Q$ is an n-ary $\delta$-$J$-hyperideal of $R$, then $Q/I$ is an n-ary  $\delta_q$ -$J$-hyperideal of  $R/I$. 
\end{corollary}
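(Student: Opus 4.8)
The plan is to obtain this corollary as a direct consequence of Theorem~\ref{45}$(2)$ applied to the canonical projection $\pi\colon R\longrightarrow R/I$, so that essentially no new computation is required. (Here I read the hypothesis as ``$Q$ and $I$ are hyperideals of $R$ with $I\subseteq Q$,'' matching the notation $Q/I$, $R/I$ used in the statement.) Recall that $\pi$ sends each $x\in R$ to its coset $\bar{x}$ in $R/I$, that $\pi$ is an epimorphism of Krasner $(m,n)$-hyperrings, and that $Ker(\pi)=I\subseteq Q$. Since $Q$ is a $\delta$-$J$-hyperideal it is proper, so $I\subseteq Q\subsetneq R$ and hence $Q/I=\pi(Q)$ is a proper hyperideal of $R/I$; there is nothing to check on that front.

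First I would verify that $\pi$ is a $\delta\delta_q$-homomorphism, where $\delta_q$ is the hyperideal expansion of $R/I$ given by $\delta_q(A/I)=\delta(A)/I$ for each hyperideal $A$ of $R$ with $I\subseteq A$ (a legitimate expansion, by item $4$ of the list of examples following the definition of a hyperideal expansion). Every hyperideal of $R/I$ has the form $A/I$ for a unique hyperideal $A\supseteq I$ of $R$, and for such $A$ one has $\pi^{-1}(A/I)=A$. Consequently $\delta(\pi^{-1}(A/I))=\delta(A)$, while $\pi^{-1}(\delta_q(A/I))=\pi^{-1}(\delta(A)/I)=\delta(A)$, the last equality using $I\subseteq A\subseteq\delta(A)$. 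Thus $\delta(\pi^{-1}(A/I))=\pi^{-1}(\delta_q(A/I))$ for every hyperideal $A/I$ of $R/I$, which is precisely the $\delta\delta_q$-homomorphism condition.

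With this in hand, all hypotheses of Theorem~\ref{45}$(2)$ are met: $\pi$ is an epimorphism, $Q$ is a hyperideal of $R$ with $Ker(\pi)=I\subseteq Q$, and $Q$ is an $n$-ary $\delta$-$J$-hyperideal of $R$. The theorem then yields that $\pi(Q)=Q/I$ is an $n$-ary $\delta_q$-$J$-hyperideal of $R/I$, which is the claim. I expect the only point requiring genuine care to be the verification of the $\delta\delta_q$-condition above, specifically the identity $\pi^{-1}(\delta(A)/I)=\delta(A)$; everything else is bookkeeping about the correspondence between hyperideals of $R/I$ and hyperideals of $R$ containing $I$.

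Alternatively, one could argue directly: if $g(\bar{x}_1^n)\in Q/I$ then $g(x_1^n)\in Q$ because $I\subseteq Q$, and applying the $n$-ary $\delta$-$J$-hyperideal property of $Q$ gives either $x_i\in J_{(m,n)}(R)$, whence $\bar{x}_i\in J_{(m,n)}(R/I)$ since $x_i$ lies in every maximal hyperideal of $R$ and in particular in every maximal hyperideal containing $I$, or $g(x_1^{i-1},1_R,x_{i+1}^n)\in\delta(Q)$, whence its coset lies in $\delta(Q)/I=\delta_q(Q/I)$. This direct route, however, merely re-derives the content already packaged in Theorem~\ref{45}, so I would present the homomorphism argument.
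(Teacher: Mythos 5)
Your proposal matches the paper's proof: both obtain the corollary by applying Theorem \ref{45}(2) to the canonical projection $\pi\colon R\longrightarrow R/I$, using $Ker(\pi)=I\subseteq Q$. In fact your write-up is more complete than the paper's one-line argument, since you explicitly verify the hypothesis that $\pi$ is a $\delta\delta_q$-homomorphism (via the correspondence $A/I\mapsto A$ and the identity $\pi^{-1}(\delta(A)/I)=\delta(A)$), a point the paper leaves implicit.
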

\begin{proof}
Consider the map $\pi: R \longrightarrow R/I$, defined by $r \longrightarrow f(r,I,0^{(m-2)})$. The map is a homomorphism of Krasner $(m, n)$-hyperrings, by Theorem
  3.2 in \cite{sorc1}. Now, by using Theorem \ref{45} (2), the claim can be proved. 
\end{proof}
\section{$(k,n)$-absorbing $\delta$-$J$-hyperideals }
In this section, we extend the notion of n-ary $\delta$-$J$-hyperideals to $(k,n)$-absorbing $\delta$-$J$-hyperideals.
\begin{definition}
Let $\delta$ be a hyperideal expansion of a Krasner $(m,n)$-hyperring $R$  and $k$ be a positive integer. A proper hyperideal $Q$ of $R$ is called $(k,n)$-absorbing $\delta$-$J$-hyperideal if for $x_1^{kn-k+1} \in R$, $g(x_1^{kn-k+1}) \in Q$ implies that $g(x_1^{(k-1)n-k+2}) \in J_{(m,n)}(R)$ or a $g$-product of $(k-1)n-k+2$ of $x_i^,$ s except $g(x_1^{(k-1)n-k+2})$ is in $\delta(Q)$.
\end{definition}
 \begin{example}
Suppose that $H=[0,1]$ and define 
$a \boxplus b=\{max\{a,b\}\}$, for $a \neq b$ and $a \boxplus b=[0,a]$, for $a =b$.
Let $\cdot$ is the usual multiplication on real numbers. In the Krasner $(2,3)$-hyperring $H$, the hyperideal $T=[0,0.5]$ is a $(2,2)$-absorbing $\delta_1$-$J$-hyperideal of $R$.
 \end{example}
 \begin{theorem}
 If  $Q$  is  $\delta$-$J$-hyperideal of a Krasner $(m,n)$-hyperring $R$, then $Q$ is  $(2,n)$-absorbing $\delta$-$J$-hyperideal.
 \end{theorem}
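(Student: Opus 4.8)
The plan is to collapse the $(2,n)$-absorbing hypothesis into a single $n$-ary membership and then apply the defining property of a $\delta$-$J$-hyperideal exactly once. First I would unwind the compound notation: since $2n-1=2(n-1)+1$, the symbol $g(x_1^{2n-1})$ means $g_{(2)}(x_1^{2n-1})=g(g(x_1^n),x_{n+1}^{2n-1})$ by definition of $g_{(l)}$ and associativity. So the hypothesis $g(x_1^{2n-1})\in Q$ of the $(2,n)$-absorbing condition reads $g(g(x_1^n),x_{n+1}^{2n-1})\in Q$. Writing $a:=g(x_1^n)$, this is just the ordinary $n$-ary statement $g(a,x_{n+1}^{2n-1})\in Q$, i.e.\ a $g$-product of the $n$ elements $a,x_{n+1},\dots,x_{2n-1}$ lies in $Q$.

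Next I would apply the $\delta$-$J$-hyperideal property of $Q$ to this $n$-ary membership at the first coordinate (the slot occupied by $a$). This yields the dichotomy: either $a\in J_{(m,n)}(R)$, that is $g(x_1^n)\in J_{(m,n)}(R)$, which is precisely the first alternative in the definition of a $(2,n)$-absorbing $\delta$-$J$-hyperideal; or $a\notin J_{(m,n)}(R)$, in which case the $\delta$-$J$-hyperideal property forces
\[
g(1_R,x_{n+1}^{2n-1})\in\delta(Q).
\]
This second element is a $g$-product of $n$ of the entries $x_1^{2n-1}$, with the removed slot filled by $1_R$, and it is different from the excluded product $g(x_1^n)$ since it is built from the second block $x_{n+1},\dots,x_{2n-1}$. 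Hence it realizes the second alternative of the definition. In either case the $(2,n)$-absorbing $\delta$-$J$-hyperideal condition is satisfied, so $Q$ is a $(2,n)$-absorbing $\delta$-$J$-hyperideal; conceptually this is just the passage from the base case $k=1$ to $k=2$.

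The delicate point, and the step I would verify most carefully, is purely one of interpretation rather than of estimate: I must confirm that $g(1_R,x_{n+1}^{2n-1})$ legitimately counts as ``a $g$-product of $n$ of the $x_i$'s except $g(x_1^n)$'' in the sense of the $(2,n)$-absorbing definition. The $1_R$ here plays exactly the same filler role it plays in the ordinary $\delta$-$J$-hyperideal conclusion $g(x_1^{i-1},1_R,x_{i+1}^n)$, so it should be admissible, and the arity bookkeeping checks out since $x_{n+1},\dots,x_{2n-1}$ are $n-1$ entries which together with $1_R$ occupy the $n$ slots of $g$. Once that reading is fixed, no case split beyond the single dichotomy above and no further computation is required.
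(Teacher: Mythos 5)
Your first two steps are exactly the paper's: rewrite $g(x_1^{2n-1})$ as $g(g(x_1^n),x_{n+1}^{2n-1})$ and apply the $\delta$-$J$-hyperideal property at the slot occupied by $a=g(x_1^n)$, obtaining the dichotomy $g(x_1^n)\in J_{(m,n)}(R)$ or $g(1_R,x_{n+1}^{2n-1})\in\delta(Q)$. The problem is precisely the ``delicate point'' you flagged, and your resolution of it does not hold up. For $k=2$ the second alternative of the $(k,n)$-absorbing $\delta$-$J$ definition asks for a $g$-product of $(k-1)n-k+2=n$ of the elements $x_1,\dots,x_{2n-1}$ (other than $g(x_1^n)$) to lie in $\delta(Q)$; since $n=(k-1)(n-1)+1$, such a product is $g$ applied to $n$ actual entries, leaving no slot for a filler. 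Your element $g(1_R,x_{n+1}^{2n-1})$ is a product of only $n-1$ of the $x_i$'s padded by $1_R$, so by the letter of the definition it is not an admissible witness. The analogy you invoke is misleading: in the conclusion $g(x_1^{i-1},1_R,x_{i+1}^n)$ of the $\delta$-$J$ definition the filler $1_R$ is written into the definition itself, whereas here the definition counts the $x_i$'s being multiplied.

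The repair is one line, and it is exactly how the paper closes its proof: since $\delta(Q)$ is a hyperideal, for each $1\leq i\leq n$,
\begin{equation*}
g(x_i,x_{n+1}^{2n-1})=g\big(g(1_R,x_{n+1}^{2n-1}),x_i,1_R^{(n-2)}\big)\in\delta(Q),
\end{equation*}
and each of these is a genuine $g$-product of $n$ of the $x_i$'s different from $g(x_1^n)$, so the second alternative of the $(2,n)$-absorbing condition is realized. With this absorption step appended, your argument coincides with the paper's proof; without it, the proof stops one (necessary) step short.
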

 \begin{proof}
 Let for $x_1^{2n-1} \in R$,  $g(x_1^{2n-1}) \in Q$. Since  $Q$ is a $\delta$-$J$-hyperideal of $R$, then $g(x_1^n) \in J_{(m,n)}(R)$ or $g(x_{n+1}^{2n-1}) \in \delta (Q)$. This implies that for  $1 \leq i \leq n$, $g(x_i,x_{n+1}^{2n-1}) \in \delta (Q)$, since $x_1^n \in R$ and $\delta(Q)$ is a hyperideal of $R$. Consequently, hyperideal $Q$ is $(2,n)$-absorbing $\delta$-primary.
 \end{proof}
 \begin{theorem}
 If $Q$  is  $(k,n)$-absorbing $\delta$-$J$-hyperideal of a Krasner $(m,n)$-hyperring $R$, then $Q$ is  $(s,n)$-absorbing $\delta$-$J$-hyperideal for $s>n$. 
 \end{theorem}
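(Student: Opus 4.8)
The plan is to deduce the higher absorbing property from the lower one by a regrouping argument; I read the hypothesis as $s\ge k$ (the printed ``$s>n$'' should be ``$s>k$'', since the assertion is monotonicity in the first parameter). Fix $x_1^{s(n-1)+1}\in R$ with $g_{(s)}(x_1^{s(n-1)+1})\in Q$. The first step is to collapse the tail of the argument list into a single element,
\[ w=g_{(s-k)}\bigl(x_{k(n-1)+1}^{\,s(n-1)+1}\bigr), \]
which absorbs the last $(s-k)(n-1)+1$ arguments; by associativity and commutativity of $g$ this rewrites the hypothesis as $g_{(k)}\bigl(x_1^{\,k(n-1)},w\bigr)\in Q$, i.e.\ an application of $g_{(k)}$ to the $kn-k+1$ effective elements $x_1,\dots,x_{k(n-1)},w$ (the $k(n-1)$ head arguments together with $w$).

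Next I would invoke the $(k,n)$-absorbing $\delta$-$J$-hypothesis on this configuration. It returns one of two alternatives: either the distinguished product $g_{(k-1)}\bigl(x_1^{\,(k-1)(n-1)+1}\bigr)$ lies in $J_{(m,n)}(R)$ --- here $(k-1)(n-1)+1\le k(n-1)$, so the first $(k-1)(n-1)+1$ effective elements are all genuine $x_i$ --- or else some $g$-product $P$ of $(k-1)(n-1)+1$ of the effective elements, distinct from that distinguished one, lies in $\delta(Q)$.

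The final step is to promote each alternative to size $(s-1)(n-1)+1$, using that $J_{(m,n)}(R)$ and $\delta(Q)$ are hyperideals and hence swallow additional $g$-factors. In the first alternative,
\[ g_{(s-1)}\bigl(x_1^{\,(s-1)(n-1)+1}\bigr)=g_{(s-k)}\bigl(g_{(k-1)}(x_1^{\,(k-1)(n-1)+1}),\,x_{(k-1)(n-1)+2}^{\,(s-1)(n-1)+1}\bigr)\subseteq J_{(m,n)}(R), \]
which is exactly the distinguished clause of the $(s,n)$-absorbing condition. In the second alternative, if $P$ already contains $w$ then expanding $w$ shows $P$ is literally a $g$-product of $(s-1)(n-1)+1$ of the $x_i$ lying in $\delta(Q)$; if $P$ does not contain $w$, I would append the missing $(s-k)(n-1)$ high-index arguments to $P$, and hyperideal absorption keeps the enlarged product inside $\delta(Q)$. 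Choosing the appended arguments to include an index exceeding $(s-1)(n-1)+1$ guarantees the witness differs from $g_{(s-1)}(x_1^{(s-1)(n-1)+1})$, so it is an admissible ``non-distinguished'' product. Hence $Q$ satisfies the $(s,n)$-absorbing $\delta$-$J$-condition.

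The main obstacle is purely combinatorial bookkeeping: verifying the index counts in the collapse (that folding the tail into $w$ leaves exactly $kn-k+1$ effective slots) and, in the second alternative, checking that the product left after absorption is genuinely one of the permitted size-$(s-1)(n-1)+1$ products other than the distinguished one. To sidestep the counting one may instead prove only the single step $s=k+1$, collapsing just one $n$-ary block into $w=g\bigl(x_{k(n-1)+1}^{\,(k+1)(n-1)+1}\bigr)$, and then induct on $s$.
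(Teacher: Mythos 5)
Your proof is correct and takes essentially the same route as the paper's: collapse surplus arguments into a single block by associativity of $g$, apply the $(k,n)$-absorbing $\delta$-$J$-hypothesis to the shortened list, and then re-expand each alternative using the fact that $J_{(m,n)}(R)$ and $\delta(Q)$ absorb additional $g$-factors. The paper carries this out only for the single increment $k\to k+1$ (collapsing one head block $g(x_1^n)$, with the passage to general $s$ and the reading $s>k$ of the misprinted ``$s>n$'' left implicit), which is precisely the inductive fallback you sketch at the end; your one-shot tail collapse with explicit witness bookkeeping is a more careful rendering of the same idea.
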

 \begin{proof}
 Let for $x_1^{(k+1)n-(k+1)+1} \in R$, $g(x_1^{(k+1)n-(k+1)+1}) \in Q$. Put $g(x_1^{n+2})=x$. Since hyperideal $Q$ of $R$ is  $(k,n)$-absorbing $\delta$-$J$-hyperideal, $g(x,x_{n+3}^{(k+1)n-(k+1)+1}) \in J_{(m,n)}(R)$ or a $g$-product of $kn-k+1$ of the $x_i^,$s except $g(x,x_{n+3}^{(k+1)n-(k+1)+1})$ is in $\delta(I)$. This implies that for $1 \leq i \leq n+2$, $g(x_i,x_{n+3}^{(k+1)n-(k+1)+1}) \in \delta(I)$ which means hyperideal $Q$ is  $(k+1,n)$-absorbing $\delta$-$J$-hyperideal. Thus hyperideal $Q$ of $R$ is  $(s,n)$-absorbing $\delta$-$J$-hyperideal for $s>n$.
 \end{proof}
 \begin{theorem}
 If  $Q$ is a $(k,n)$-absorbing $J$-hyperideal of a Krasner $(m,n)$-hyperring $R$, then $\sqrt{Q}^{(m,n)}$ is a $(k,n)$-absorbing  $\delta$-$J$-hyperideal.
 \end{theorem}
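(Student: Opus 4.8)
The plan is to move between $Q$ and $\sqrt{Q}^{(m,n)}$ through the radical description recalled after Definition~2.9, exploiting commutativity and associativity of $g$ to rewrite a power of a product as the product of the corresponding powers. Throughout, fix the expansion $\delta$; since $\sqrt{Q}^{(m,n)} \subseteq \delta(\sqrt{Q}^{(m,n)})$, it suffices to land the relevant subproducts inside $\sqrt{Q}^{(m,n)}$ itself. First note that $\sqrt{Q}^{(m,n)}$ is proper, since $1_R \in \sqrt{Q}^{(m,n)}$ would force $1_R \in Q$, contrary to $Q$ being proper.

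Now take $x_1^{kn-k+1} \in R$ with $g(x_1^{kn-k+1}) \in \sqrt{Q}^{(m,n)}$ and write $a = g(x_1^{kn-k+1})$ for this $k$-fold iterated product. By the radical description there is $t \in \mathbb{N}$ with $g(a^{(t)}, 1_R^{(n-t)}) \in Q$ when $t \leq n$ (the case $t = l(n-1)+1$, where one uses $g_{(l)}(a^{(t)})$, is identical). Setting $y_j = g(x_j^{(t)}, 1_R^{(n-t)})$ for each $j$, commutativity and associativity give the key identity $g(a^{(t)}, 1_R^{(n-t)}) = g(y_1^{kn-k+1})$, the right-hand side being again the $k$-fold iterated product (with $1_R$'s inserted to match arities); more generally the $t$-th power of any subproduct of the $x_j$'s equals the corresponding subproduct of the $y_j$'s. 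Hence $g(y_1^{kn-k+1}) \in Q$.

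Apply the hypothesis that $Q$ is a $(k,n)$-absorbing $J$-hyperideal to $y_1^{kn-k+1}$. Either $g(y_1^{(k-1)n-k+2}) \in J_{(m,n)}(R)$, or some $g$-product of $(k-1)n-k+2$ of the $y_j$'s distinct from $g(y_1^{(k-1)n-k+2})$ lies in $Q$. In the first case $g(y_1^{(k-1)n-k+2}) = g\big(g(x_1^{(k-1)n-k+2})^{(t)}, 1_R^{(n-t)}\big) \in J_{(m,n)}(R)$, so $g(x_1^{(k-1)n-k+2}) \in \sqrt{J_{(m,n)}(R)}^{(m,n)} = J_{(m,n)}(R)$, which is the first alternative required of $\sqrt{Q}^{(m,n)}$. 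In the second case, the same index selection on the $x_j$'s yields a subproduct whose $t$-th power lies in $Q$, hence the subproduct itself lies in $\sqrt{Q}^{(m,n)} \subseteq \delta(\sqrt{Q}^{(m,n)})$; by the preceding paragraph it is a $g$-product of $(k-1)n-k+2$ of the $x_i$'s distinct from $g(x_1^{(k-1)n-k+2})$, which is the second alternative. Thus $\sqrt{Q}^{(m,n)}$ is a $(k,n)$-absorbing $\delta$-$J$-hyperideal.

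The routine steps are the insertion of scalar identities to reconcile arities and the observation that fixing an index set before and after raising to the $t$-th power preserves the ``except $g(x_1^{(k-1)n-k+2})$'' clause. The one point requiring care, and the main obstacle, is the power-of-a-product identity $g\big(g(x_1^{kn-k+1})^{(t)}, 1_R^{(n-t)}\big) = g(y_1^{kn-k+1})$ together with its subproduct versions, handled uniformly in the two ranges $t \leq n$ and $t = l(n-1)+1$; granting this, the $(k,n)$-absorbing $J$-property of $Q$ and the equality $\sqrt{J_{(m,n)}(R)}^{(m,n)} = J_{(m,n)}(R)$ close the argument.
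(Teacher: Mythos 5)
Your proof is correct and takes essentially the same route as the paper's: pass from $g(x_1^{kn-k+1}) \in \sqrt{Q}^{(m,n)}$ to a power of it lying in $Q$, rewrite that power as a product of the corresponding powers of the $x_i$'s, apply the $(k,n)$-absorbing $J$-property of $Q$ to those powers, and close with the equality $\sqrt{J_{(m,n)}(R)}^{(m,n)} = J_{(m,n)}(R)$. The only difference is presentational: the paper argues contrapositively and leaves the power-of-a-product identity and the translation between subproducts of the $x_i$'s and of their powers implicit, whereas you argue directly, introduce the elements $y_j$ explicitly, and handle the arity bookkeeping and properness of $\sqrt{Q}^{(m,n)}$ that the paper glosses over.
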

 \begin{proof}
 Let for $x_1^{kn-k+1} \in R$, $g(x_1^{kn-k+1}) \in \sqrt{Q}^{(m,n)}$. We suppose that none of the $g$-products of $(k-1)n-k+2$ of the $x_i^,$s other than $g(x_1^{(k-1)n-k+2})$ are  in $\delta(\sqrt{Q}^{(m,n)})$. Since $g(x_1^{kn-k+1}) \in \sqrt{Q}^{(m,n)}$ then for some $t \in \mathbb{N}$ we have for $t \leq n$, $g(g(x_1^{kn-k+1})^{(t)},1_R^{(n-t)}) \in Q$ or for $t>n$ with $t=l(n-1)+1$, $g_{(l)}(g(x_1^{kn-k+1})^{(t)}) \in Q$. In the former case, since all $g$-products  of the $x_i^,$s  other than $g(x_1^{(k-1)n-k+2})$ are not in $\delta(\sqrt{Q}^{(m,n)})$,  then they  are not in  $Q$. Since  $Q$ is a $(k,n)$-absorbing $J$-hyperideal of $R$,  then we have  $g(g(x_1^{(k-1)n-k+2)})^{(l)},1_R^{(n-t)}) \in  J_{(m,n)}(R)$ which means $g(x_1^{(k-1)n-k+2}) \in \sqrt{ J_{(m,n)}(R)}^{(m,n)}= J_{(m,n)}(R)$. By using similar argument for the second case, the claim is completed.
 \end{proof}
 \begin{theorem}
 If  $\delta(Q)$  is  $(2,n)$-absorbing $J$-hyperideal for hyperideal $Q$ of a Krasner $(m,n)$-hyperring $R$, then $Q$ is a $(3,n)$-absorbing  $\delta$-$J$-hyperideal of $R$.
 \end{theorem}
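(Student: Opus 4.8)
The plan is to reduce the $(3,n)$-absorbing condition for $Q$ to the $(2,n)$-absorbing condition for the hyperideal $P:=\delta(Q)$ by collapsing the first $n$ factors of a $(3n-2)$-fold product into a single element. The only structural facts I will need are the associativity of the iterated operation $g_{(l)}$, the commutativity of $g$, and the fact that $\delta(Q)$ is a hyperideal (so it absorbs further factors).

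First I would take $x_1^{3n-2}\in R$ with $g(x_1^{3n-2})\in Q$. Since $Q\subseteq\delta(Q)=P$ by the definition of a hyperideal expansion, we get $g(x_1^{3n-2})\in P$. Setting $w=g(x_1^n)$ and using associativity, I rewrite $g(x_1^{3n-2})=g(w,x_{n+1}^{3n-2})$ as a product of the $2n-1$ elements $w,x_{n+1},\dots,x_{3n-2}$, observing that the core $n$-fold product of these equals $g(w,x_{n+1}^{2n-1})=g(x_1^{2n-1})$. Now I apply the hypothesis that $P$ is a $(2,n)$-absorbing $J$-hyperideal to this $(2n-1)$-tuple. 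In the first case $g(x_1^{2n-1})=g(w,x_{n+1}^{2n-1})\in J_{(m,n)}(R)$, which is exactly the first alternative required for $Q$ to be $(3,n)$-absorbing $\delta$-$J$. In the second case some $n$-fold $g$-product of $w,x_{n+1},\dots,x_{3n-2}$ other than $g(w,x_{n+1}^{2n-1})$ lies in $P=\delta(Q)$, and I split according to whether $w$ occurs as a factor.

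If $w$ occurs, substituting $w=g(x_1^n)$ turns the product into a $(2n-1)$-fold product of the original $x_i$'s lying in $\delta(Q)$; the inequality with the core product translates into an inequality of index sets guaranteeing it is not $g(x_1^{2n-1})$. If $w$ does not occur, the product is an $n$-fold product of $x_i$'s with indices in $\{n+1,\dots,3n-2\}$, which I extend to a $(2n-1)$-fold product by absorbing the $n-1$ further factors $x_1,\dots,x_{n-1}$; this is legitimate since $\delta(Q)$ is a hyperideal, and the resulting product stays in $\delta(Q)$ and differs from $g(x_1^{2n-1})$ because the index $n$ is absent. In every case $Q$ satisfies the defining condition of a $(3,n)$-absorbing $\delta$-$J$-hyperideal.

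I expect the main obstacle to be the bookkeeping across the two indexing systems together with the asymmetry built into the definition, namely the privileged product $g(x_1^{2n-1})$: one must invoke the commutativity of $g$ to treat all $g$-products symmetrically and then verify in each subcase that the $(2n-1)$-fold product produced is genuinely one of those counted as ``except $g(x_1^{2n-1})$''. The re-association identity $g(x_1^{3n-2})=g(g(x_1^n),x_{n+1}^{3n-2})$ and the absorption of the extra factors into $\delta(Q)$ are the two nontrivial steps, and both follow routinely from associativity and the hyperideal property.
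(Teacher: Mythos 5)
Your proof is correct, and it refines the paper's strategy via a different (and better) choice of decomposition. Both arguments reduce the $(3,n)$-condition for $Q$ to the $(2,n)$-condition for $\delta(Q)$ by collapsing $n$ of the $3n-2$ factors into a single element using associativity; but the paper collapses $x_1$ together with the tail, working with the $(2n-1)$-tuple $\bigl(g(x_1,x_{2n}^{3n-2}),x_2,\dots,x_{2n-1}\bigr)$, whereas you collapse the head, working with $\bigl(g(x_1^n),x_{n+1},\dots,x_{3n-2}\bigr)$. Your choice has a concrete payoff: the privileged $n$-fold product of your collapsed tuple is literally $g(x_1^{2n-1})$, so the $J_{(m,n)}(R)$-alternatives of the two definitions match up exactly and nothing needs to be ruled in or out. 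In the paper's decomposition the privileged product of the collapsed tuple is \emph{not} $g(x_1^{2n-1})$, so transferring the hypothesis $g(x_1^{2n-1})\notin J_{(m,n)}(R)$ requires an extra step (one must use that $J_{(m,n)}(R)$ is a hyperideal, so that membership of the collapsed tuple's leading product in $J_{(m,n)}(R)$ would force $g(x_1^{2n-1})\in J_{(m,n)}(R)$); the paper glosses over this, and its key assertion ``$g(x_2^{2n-1})\notin J_{(m,n)}(R)$'' is not even a well-formed product, since $x_2,\dots,x_{2n-1}$ is only $2n-2$ entries. Moreover, the paper records only two possible outcomes of the $(2,n)$-absorbing property, both containing the collapsed factor, whereas the definition permits \emph{any} $n$-fold product of the $2n-1$ entries other than the privileged one; your subcase in which $w$ does not occur --- padded back up to a $(2n-1)$-fold product by absorption into the hyperideal $\delta(Q)$, with the missing index $n$ guaranteeing it differs from $g(x_1^{2n-1})$ --- is precisely the case the paper's enumeration silently omits. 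So your write-up is not only correct but supplies the bookkeeping the published proof leaves implicit.
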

 \begin{proof}
 Let for $x_1^{3n-2} \in R$, $g(x_1^{3n-2}) \in Q$ but $g(x_1^{2n-1}) \notin  J_{(m,n)}(R)$. By $g(x_1^{3n-2}) \in Q$ it follows that $g(g(x_1,x_{2n}^{3n-2}),x_2^{2n-1}) \in Q \subseteq \delta(Q)$. Since  $\delta(Q)$  is  a $(2,n)$-absorbing $J$-hyperideal and $g(x_2^{2n-1}) \notin  J_{(m,n)}(R)$, then we have $g(x_1^n,x_{2n}^{3n-2}) \in \delta(Q)$ or $g(x_1,x_{n+1}^{2n-1},x_{2n}^{3n-2}) \in \delta(Q)$.  Hence $Q$ is a $(3,n)$-absorbing  $\delta$-$J$-hyperideal of $R$.
 \end{proof}
 \begin{theorem}
 If  $\delta(Q)$  is a $(k+1,n)$-absorbing  $\delta$-$J$-hyperideal for the hyperideal $Q$ of a Krasner $(m,n)$-hyperring $R$, then $Q$ is  $(k+1,n)$-absorbing  $\delta$-$J$-hyperideal.
 \end{theorem}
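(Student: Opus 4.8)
The plan is to follow the template of Theorem \ref{0041} closely, the only change being that the single distinguished factor of the non-absorbing setting is replaced by a distinguished $g$-product of $kn-k+1$ factors. The whole argument is driven by the containment $Q \subseteq \delta(Q)$, which is the first axiom in the definition of a hyperideal expansion; this is what lets a membership in $Q$ be upgraded to a membership in $\delta(Q)$, where the absorbing hypothesis applies.

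Concretely, I would start from an arbitrary $x_1^{(k+1)n-(k+1)+1} \in R$ with $g(x_1^{(k+1)n-(k+1)+1}) \in Q$, since this is the instance that the definition of a $(k+1,n)$-absorbing $\delta$-$J$-hyperideal requires us to analyse. From $Q \subseteq \delta(Q)$ we obtain $g(x_1^{(k+1)n-(k+1)+1}) \in \delta(Q)$ at once. I would then invoke the hypothesis on $\delta(Q)$: its $(k+1,n)$-absorbing property yields either $g(x_1^{kn-k+1}) \in J_{(m,n)}(R)$, or else some $g$-product of $kn-k+1$ of the $x_i$ other than $g(x_1^{kn-k+1})$ lies in $\delta(Q)$.

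The decisive observation is that this dichotomy is, symbol for symbol, the defining condition for $Q$ to be a $(k+1,n)$-absorbing $\delta$-$J$-hyperideal: the first option is the Jacobson-radical clause, and in the second option the surviving $g$-product already belongs to $\delta(Q)$, which is precisely $\delta$ evaluated at $Q$. No regrouping or associativity juggling is needed, in contrast to the proof of the preceding theorem where the two degrees differ; here the degrees agree, so the transfer is immediate.

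The one place I expect to need attention is the target of the second alternative, which must land in $\delta(Q)$ rather than in $\delta(\delta(Q))$. If the hypothesis is read with the ordinary Jacobson radical, that is, $\delta(Q)$ is a $(k+1,n)$-absorbing $J$-hyperideal, this is automatic because the second clause of that property already names $\delta(Q)$. If instead one insists on the $\delta$-$J$ form of the hypothesis, the same conclusion persists as soon as $\delta$ is idempotent on its image, $\delta(\delta(Q)) = \delta(Q)$, a condition met by each of the expansions $\delta_0$, $\delta_1$ and $\delta_R$ exhibited earlier. Granting this, the proof is a one-line consequence of the definitions once $Q \subseteq \delta(Q)$ has been used.
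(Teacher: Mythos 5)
Your proposal is correct (modulo the caveat you yourself flag), but it is not the paper's argument; the two differ in an instructive way. You keep all $(k+1)n-(k+1)+1$ factors intact, push the product into $\delta(Q)$ via $Q \subseteq \delta(Q)$, and invoke the absorbing property of $\delta(Q)$ at the \emph{same} arity, so that the resulting dichotomy matches the definition for $Q$ verbatim. The paper instead regroups: it writes $g(x_1^{(k+1)n-(k+1)+1}) = g(x_1^{kn-k}, g(x_{kn-k+1}^{(k+1)n-(k+1)+1})) \in Q \subseteq \delta(Q)$, bundling the last $n$ factors into a single element so as to obtain a product of only $kn-k+1$ factors, and then applies the absorbing hypothesis to that shorter product. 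That application only makes sense if $\delta(Q)$ is assumed $(k,n)$-absorbing --- one level below the conclusion --- which is exactly the pattern of the preceding theorem (where $\delta(Q)$ being $(2,n)$-absorbing gives that $Q$ is $(3,n)$-absorbing) and strongly suggests that the ``$(k+1,n)$'' in the hypothesis of the printed statement is a typo for ``$(k,n)$''. So the paper's proof really targets the level-raising variant, and it needs the regrouping plus a case analysis (which it glosses over) according to whether the surviving $g$-product contains the bundled factor; your argument proves the statement as literally printed, which is in fact the stronger theorem, since $(k,n)$-absorbing implies $(k+1,n)$-absorbing by the earlier result in this section. Finally, the $\delta(\delta(Q))$-versus-$\delta(Q)$ defect you isolate is genuinely present in the paper as well: its proof silently places the output of the hypothesis in $\delta(Q)$, when the literal $\delta$-$J$ reading of the hypothesis only yields membership in $\delta(\delta(Q))$. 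Your explicit repair --- read the hypothesis as a $(k+1,n)$-absorbing $J$-hyperideal, or require $\delta(\delta(Q)) = \delta(Q)$, as holds for $\delta_0$, $\delta_1$ and $\delta_R$ --- is therefore more careful than the original.
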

 \begin{proof}
 Let for $x_1^{(k+1)n-(k+1)+1} \in R$, $g(x_1^{(k+1)n-(k+1)+1})
 \in Q$ but $g(x_1^{kn-k+1}) \notin J_{(m,n)}(R)$.  Then we get $g(x_1^{(k+1)n-(k+1)+1})=g(x_1^{kn-k},g(x_{kn-k+1}^{(k+1)n-(k+1)+1})) \in Q \subseteq \delta(Q)$.
  Since hyperideal $\delta(I)$  is  $(k+1,n)$-absorbing $\delta$-$J$-hyperideal and  $g(x_1^{kn-k}) \notin  J_{(m,n)}(R)$, we get for $1 \leq i \leq n$, $g(x_1^{i-1},x_{i+1}^{kn-k},g(x_{kn-k+1}^{(k+1)n-(k+1)+1})) \in \delta(I)$.    Consequently, hyperideal $I$ is  a $(k+1,n)$-absorbing  $\delta$-$J$-hyperideal.
 \end{proof}
\begin{theorem}
Let $(R_1,f_1,g_1)$ and $(R_2,f_2,g_2)$ be two Krasner $(m,n)$-hyperrings and $ h:R_1 \longrightarrow R_2$ be a $\delta \gamma$-homomorphism such that $\delta$ and $\gamma$ are two hyperideal expansions of  Krasner $(m,n)$-hyperring $R_1$ and $R_2$, respectively. Then the following statements hold :

$(1)$ Let $h$ be a monomorphism. If $Q_2$ is a $(k,n)$-absorbing $\gamma$-$J_2$-hyperideal of $R_2$, then $h^{-1} (Q_2)$ is a $(k,n)$-absorbing $\delta$-$J_1$-hyperideal of $R_1$.

 $(2)$ If $h $ is an epimorphism and $Q_1$ is a $(k,n)$-absorbing $\delta$-$J_1$-hyperideal of $R_1$ such that  $Ker(h) \subseteq Q_1$,  then $h(Q_1)$ is a $\gamma$-$J_2$-hyperideal of $R_2$.
\end{theorem}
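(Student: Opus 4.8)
The plan is to mirror the argument of Theorem~\ref{45}, upgrading the single-variable $J$-condition to the $(k,n)$-absorbing condition and tracking iterated $g$-products (note that $g(x_1^{kn-k+1})$ is really $g_{(k)}$ applied to $k(n-1)+1$ arguments, and the distinguished sub-product $g(x_1^{(k-1)n-k+2})$ is $g_{(k-1)}$, since $(k-1)n-k+2=(k-1)(n-1)+1$) instead of single elements throughout. Both parts rest on the three facts already in use: that a homomorphism satisfies $h(g_1(\cdots))=g_2(h(\cdots))$ and hence commutes with every iterated product $g_{(l)}$; that the $\delta\gamma$-identity gives $\delta(h^{-1}(Q_2))=h^{-1}(\gamma(Q_2))$ and, for a $\delta\gamma$-epimorphism with $Ker(h)\subseteq Q_1$, $\gamma(h(Q_1))=h(\delta(Q_1))$; and that a monomorphism makes Jacobson-radical membership descend while an epimorphism makes non-membership lift, exactly as exploited in Theorem~\ref{45}.

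For part $(1)$, I would take $x_1^{kn-k+1}\in R_1$ with $g_1(x_1^{kn-k+1})\in h^{-1}(Q_2)$, so that $g_2(h(x_1),\dots,h(x_{kn-k+1}))=h(g_1(x_1^{kn-k+1}))\in Q_2$. Applying the $(k,n)$-absorbing $\gamma$-$J_2$-property of $Q_2$ to the images $h(x_i)$ yields either $g_2(h(x_1),\dots,h(x_{(k-1)n-k+2}))\in J_{(m,n)}(R_2)$ or some $g$-product of $(k-1)n-k+2$ of the $h(x_i)$ other than the distinguished one lying in $\gamma(Q_2)$. In the first case $h(g_1(x_1^{(k-1)n-k+2}))\in J_{(m,n)}(R_2)$, and since $h$ is a monomorphism the product $g_1(x_1^{(k-1)n-k+2})$ descends into $J_{(m,n)}(R_1)$. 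In the second case the corresponding $g$-product of the $x_i$ maps under $h$ into $\gamma(Q_2)$, hence lies in $h^{-1}(\gamma(Q_2))=\delta(h^{-1}(Q_2))$. Either way the defining condition holds, so $h^{-1}(Q_2)$ is a $(k,n)$-absorbing $\delta$-$J_1$-hyperideal. For part $(2)$, I would take $y_1^{kn-k+1}\in R_2$ with $g_2(y_1^{kn-k+1})\in h(Q_1)$; surjectivity gives $x_i$ with $h(x_i)=y_i$, so $h(g_1(x_1^{kn-k+1}))=g_2(y_1^{kn-k+1})\in h(Q_1)$, and $Ker(h)\subseteq Q_1$ upgrades this to $g_1(x_1^{kn-k+1})\in Q_1$. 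The $(k,n)$-absorbing $\delta$-$J_1$-property of $Q_1$ then gives either $g_1(x_1^{(k-1)n-k+2})\in J_{(m,n)}(R_1)$ or a $g$-product in $\delta(Q_1)$; pushing these forward by $h$ (using the epimorphism to lift $J$-non-membership and the identity $\gamma(h(Q_1))=h(\delta(Q_1))$) transfers the conclusion to $R_2$, establishing the $(k,n)$-absorbing $\gamma$-$J_2$-property of $h(Q_1)$.

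The main obstacle will be the bookkeeping forced by the phrase \emph{a $g$-product of $(k-1)n-k+2$ of the $x_i$ except the distinguished one}: this is a whole family of iterated products rather than a single term, so I must first record, once and for all, that $h$ carries each admissible $g_{(k-1)}$-product of chosen $x_i$ to the same product of the images $h(x_i)$ (an immediate induction on the nesting depth using $h\circ g_1=g_2\circ h$ and associativity), and that the ``except'' clause is preserved because the bijection $x_i\mapsto h(x_i)$ on indices (injective in part $(1)$, a surjective choice in part $(2)$) matches the distinguished block on both sides. Once this correspondence of $g$-products is set up cleanly, the remaining work is the routine translation via the $\delta\gamma$-identities and the monomorphism/epimorphism behaviour of $J_{(m,n)}$ already carried out in Theorem~\ref{45}; as there, the case $t=l(n-1)+1$ is handled by the identical argument applied to $g_{(l)}$.
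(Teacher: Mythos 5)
Your proposal follows the paper's own proof essentially step for step: pull the product back (resp.\ push it forward) via $h\circ g_1=g_2\circ h$, apply the $(k,n)$-absorbing hypothesis, and translate the two alternatives using the monomorphism/epimorphism behaviour of $J_{(m,n)}$ together with the $\delta\gamma$-identities $\delta(h^{-1}(Q_2))=h^{-1}(\gamma(Q_2))$ and $\gamma(h(Q_1))=h(\delta(Q_1))$, exactly as the paper does. The only discrepancy is your closing remark about a case $t=l(n-1)+1$, which does not arise in this theorem (no radical argument is involved); it is harmless but unnecessary.
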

\begin{proof}
$(1)$ Let for $x_1^{kn-k+1} \in R_1$, $g_1(x_1^{kn-k+1}) \in h^{-1}(Q_2)$. It means $h(g_1(x_1^{kn-k+1}))=g_2(h(x_1),...,h(x_{kn-k+1})) \in Q_2$. Since $Q_2$ is a $(k,n)$-absorbing $\gamma$-$J_2$-hyperideal of $R_2$, we get $g_2(h(x_1),...,h(x_{(k-1)n-k+2}))=h(g_1(x_1^{(k-1)n-k+2}) \in J_{(m,n)}(R_2)$ which means $g_1(x_1^{(k-1)n-k+2}) \in J_{(m,n)}(R_1)$, as $h$ is a monomorphism, or 

$g_2(h(x_1),...,h(x_{i-1}),h(x_{i+1}),...,h(x_{kn-k+1)})=h(g_1(x_1^{i-1},x_{i+1}^{kn-k+1})) \in \gamma(Q_2)$\\
 which means  $g_1(x_1^{i-1},x_{i+1}^{kn-k+1})) \in h^{-1}(\gamma(Q_2)$ for $1 \leq i \leq n$. Since $h$ is a $\delta \gamma$-homomorphism then $g_1(x_1^{i-1},x_{i+1}^{kn-k+1})) \in \delta(h^{-1}(Q_2))$ for $1 \leq i \leq n$. Therefore we conclude that $h^{-1} (Q_2)$ is a $(k,n)$-absorbing $\delta$-$J_1$-hyperideal of $R_1$.
 
 $(2)$ Let for $y_1^{kn-k+1} \in R_2$, $g_2(y_1^{kn-k+1}) \in h(Q_1)$ such that $g_2(y_1^{(k-1)n-k+2}) \notin J_{(m,n)}(R_2)$. Then there are  $x_1^{(k-1)n-k+2} \in R_1$ such that $h(x_i)=y_i$ for $1 \leq i \leq (k-1)n-k+2$. Hence, $h(g_1(x_1^{kn-k+1})=g_2(h(x_1),...,h(x_{kn-k+1}) \in h(Q_1)$.  Since $Q_1$ containing $Ker (h)$ then  $g_1(x_1 ^{kn-k+1}) \in Q_1$. Since $Q_1$ is a $(k,n)$-absorbing $\delta$-$J_1$-hyperideal of $R_1$ and $g_1(x_1^{(k-1)n-k+2}) \notin J_{(m,n)}(R_1)$ , then  $g_1(x_1^{i-1},x_{i+1}^{kn-k+1}) \in \delta(Q_1)$ which means 
 
 $h(g_1(x_1^{i-1},x_{i+1}^{kn-k+1}))=g_2(h(x_1),...,h(x_{i-1}),h(x_{i+1}),...,h(x_{kn-k+1}))$
 
 $\hspace{3.3cm}=g_2(y_1^{i-1},y_{i+1}^{kn-k+1}) \in h(\delta(Q_1))$\\ for $1 \leq i \leq (k-1)n-k+2$.  Since $h$ is a $\delta \gamma$-epimorphism then we have $g_2(y_1^{i-1},y_{i+1}^{kn-k+1}) \in \gamma(h((Q_1))$. Consequently, $h(Q_1)$ is a $\gamma$-$J_2$-hyperideal of $R_2$.
\end{proof}


\begin{thebibliography}{99}


\bibitem{sorc1}
R. Ameri, M. Norouzi, Prime and primary hyperideals in Krasner $(m,n)$-hyperrings,  {\it European Journal Of Combinatorics}, (2013) 379-390.


\bibitem{mah2} 
M. Anbarloei, $n$-ary 2-absorbing and 2-absorbing primary hyperideals in Krasner $(m,n)$-hyperrings, {\it Matematicki  Vesnik,} {\bf 71} (3) (2019) 250-262. 


\bibitem{mah3}
M. Anbarloei, Unifing the prime and primary hyperideals under one frame in a Krasner $(m,n)$-hyperring, {\it Comm. Algebra}, {\bf 49} (2021) 3432-3446.

\bibitem{asadi}
A. Asadi, R. Ameri, Direct limit of Krasner (m,n)-hyperrings,{\it Journal of Sciences}, {\bf 31} (1) (2020) 75-83.

\bibitem{s7}
 G. Crombez, On $(m, n)$- rings, {\it Abh. Math. Semin. Univ.,} Hamburg, {\bf 37} (1972) 180-199.

\bibitem{s8}
 G. Crombez, J. Timm, On $(m, n)$-quotient rings, {\it Abh. Math. Semin. Univ.,} Hamburg, {\bf 37} (1972) 200-203. 

\bibitem{s2}
S. Corsini, Prolegomena of hypergroup theory, {\it Second edition, Aviani editor, Italy, } (1993). 

\bibitem{s3}
S. Corsini, V. Leoreanu, Applications of hyperstructure theory, {\it Advances in Mathematics }, vol. 5, Kluwer Academic Publishers, (2003). 

\bibitem{davvaz1}
B. Davvaz, V. Leoreanu-Fotea, Hyperring Theory and Applications, {\it International Academic Press, Palm Harbor, USA}, (2007).


\bibitem{s9}
B. Davvaz, T. Vougiouklis, n-ary hypergroups, {\it Iran. J. Sci. Technol.,} {\bf 30} (A2) (2006) 165-174. 

\bibitem{bmb2}
Z. Dongsheng, $\delta$-primary ideals of commutative rings. {\it Kyungpook Mathematical Journal,} {\bf 41} (2001) 17-22. 

\bibitem{s6}
W. Dorente, Untersuchungen über einen verallgemeinerten Gruppenbegriff, {\it Math. Z.,} {\bf 29} (1928) 1-19. 

\bibitem{bmb3}
B. Fahid, Z. Dongsheng, 2-Absorbing $\delta$-primary ideals of commutative rings. {\it Kyungpook Mathematical Journal}, {\bf 57}
(2017) 193-198. 

\bibitem{rev2}
K. Hila, K. Naka,  B. Davvaz,  On $(k,n)$-absorbing
hyperideals in Krasner $(m,n)$-hyperrings, {\it Quarterly Journal of
Mathematics}, {\bf 69}
(2018) 1035-1046.


\bibitem{s5}
E. Kasner, An extension of the group concept (reported by L.G. Weld), {\it Bull. Amer. Math. Soc.,} {\bf 10} (1904) 290-291.

\bibitem{ata}
H. A. Khashan, A. B. Bani-Ata, $J$-ideals of commutative rings, {\it International Electronic Journal of Algebra,} {\bf 29} (2021) 148-164.


\bibitem{l1}
V. Leoreanu, Canonical n-ary hypergroups, {\it Ital. J. Pure Appl. Math.,} {\bf 24}(2008).

\bibitem{l2}
V. Leoreanu-Fotea, B. Davvaz, n-hypergroups and binary relations, {\it European J. Combin.,} {\bf 29} (2008) 1027-1218.

\bibitem{l3}
 V. Leoreanu-Fotea, B. Davvaz, Roughness in n-ary hypergroups, {\it Inform. Sci.,} {\bf 178} (2008) 4114-4124. 
 
 
 

\bibitem{ma}
X. Ma, J. Zhan, B. Davvaz, Applications of rough soft sets to Krasner $(m,n)$-hyperrings and corresponding decision making methods, {\it Filomat}, {\bf 32} (2018) 6599-6614.


\bibitem{s1} 
F. Marty, Sur une generalization de la notion de groupe, {\it $8^{th}$ Congress Math. Scandenaves, Stockholm,} (1934) 45-49.


\bibitem{d1}
S. Mirvakili, B. Davvaz, Relations on Krasner $(m,n)$-hyperrings, {\it European J. Combin.,} {\bf 31}(2010) 790-802.

\bibitem{cons}
S. Mirvakili, B. Davvaz, Constructions of  $(m,n)$-hyperrings, {\it Matematicki Vesnik,} {\bf 67} (1) (2015) 1-16.


\bibitem{nour}
M. Norouzi, R.Ameri, V. Leoreanu-Fotea, Normal hyperideals in Krasner $(m,n)$-hyperrings, {\it An. St. Univ. Ovidius Constanta} {\bf 26} (3) (2018) 197-211.

\bibitem{davvaz2}
S. Omidi, B. Davvaz, Contribution to study special kinds of hyperideals in ordered semihyperrings, {\it J. Taibah Univ. Sci.}, {\bf 11} (2017) 1083-1094.

\bibitem{rev1}
S. Ostadhadi-Dehkordi, B. Davvaz,  A Note on
Isomorphism Theorems of Krasner $(m,n)$- hyperrings, {\it Arabian Journal of
Mathematics,} {\bf 5} (2016) 103-115.


\bibitem{bmb4}
E. Ozel Ay, G. Yesilot,  D. Sonmez, $\delta$-Primary Hyperideals on Commutative Hyperrings, {\it Int. J.  Math. and Mathematical Sciences,}  (2017) Article ID 5428160, 4 pages. 


\bibitem{s4}
T. Vougiouklis, Hyperstructures and their representations, {\it Hadronic Press Inc., Florida,}(1994).

\bibitem{s10}
M.M. Zahedi, R. Ameri, On the prime, primary and maximal subhypermodules, {\it Ital. J. Pure Appl. Math.,} {\bf 5} (1999) 61-80.

\bibitem{jian}
  J. Zhan, B. Davvaz, K.P. Shum, Generalized fuzzy hyperideals of hyperrings, {\it Computers and Mathematics with Applications}, {\bf 56} (2008) 1732-1740.
\end{thebibliography}
\end{document}